\theoremstyle{definition}
\newtheorem{definition}{Definition}[section]
\newtheorem{example}[definition]{Example}
\newtheorem{remark}[definition]{Remark}
\theoremstyle{plain}
\newtheorem{lemma}[definition]{Lemma}
\newtheorem{proposition}[definition]{Proposition}
\newtheorem{theorem}[definition]{Theorem}
\DeclareMathOperator{\TP}{\mathbb{T}\mathbb{P}}
\DeclareMathOperator{\val}{val}
\DeclareMathOperator{\conv}{conv}
\DeclareMathOperator{\tropdet}{tropdet}
\author{Filip Cools\footnote{K.U.Leuven, Department of
Mathematics, Celestijnenlaan 200B, B-3001 Leuven, Belgium; email address:
Filip.Cools@wis.kuleuven.be; the author is a postdoctoral fellow of the Research Foundation - Flanders (FWO).}}
\title{Linear pencils of tropical plane curves}
\date{}
\begin{document}
\maketitle {\footnotesize \emph{\textbf{Abstract.---} Analogously as in classical algebraic geometry, linear pencils of tropical plane curves are parameterized by tropical lines in a coefficient space. A special example of such a linear pencil is the set of tropical plane curves with an $n$-element support set through a general configuration of $n$ points in the tropical plane. In \cite{RGST}, it is proved that these linear pencils are compatible with their support set. In this article, we give a characterization of points lying in the fixed locus of a tropical linear pencil and show that each compatible linear pencil comes from a general configuration. \\ \\
\indent \textbf{MSC 2010.---} 14T05, 52B20, 14H50, 14C20}}
\\ ${}$

\section{Introduction}

Let $K=\mathbb{C}\{\{t\}\}$ be the field of Puisseux series (or any other algebraically closed field with a non-trivial valuation), and let $\mathcal{A}=\{a_1,\ldots,a_n\}$ be a subset of $\{(r,s,t)\in(\mathbb{Z}_{\geq 0})^3\,:\,r+s+t=d\}$ for some integer $d$, with $a_i=(r_i,s_i,t_i)$. We always assume that the convex hull $\conv(\mathcal{A})$ is two-dimensional, hence $n\geq 3$. A polynomial $f$ of the form $$\sum_{i\in\{1,\ldots,n\}}\,k_i X^{r_i}Y^{s_i}Z^{t_i}\in K[X,Y,Z]$$ defines a projective plane curve $V(f)$ of degree $d$ in $\mathbb{P}^2(K)$. The \textit{tropical polynomial} corresponding to $f$ is the piece-wise linear map $$F_c:\mathbb{R}^3\to\mathbb{R}:(x,y,z)\mapsto \min_{i\in\{1,\ldots,n\}}\,\{c_i+r_ix+s_iy+t_iz\}$$ with $c=(c_1,\ldots,c_n)=(\val(k_1),\ldots,\val(k_n))\in\mathbb{R}^n$, which is obtained by replacing all the operations in $f$ by the tropical operations $\oplus$ and $\otimes$ (where $x\oplus y:=\min\{x,y\}$ and $x\otimes y:=x+y$ for all $x,y\in\mathbb{R}$) and all the coefficients by its valuations. Let $\mathcal{T}(F_c)$ be the set of all points $(x,y,z)\in\mathbb{R}^3$ for which the minimum $F_c(x,y,z)$ is attained at least twice. A theorem due to Kapranov tells us that $\mathcal{T}(F_c)$ is equal to closure in $\mathbb{R}^3$ of $$\{(\val(X),\val(Y),\val(Z))\,|\,(X,Y,Z)\in V(f)\cap (K^{\times})^3\}.$$ Since $\mathcal{T}(F_c)$ is closed under tropical scalar multiplication, i.e. $$\lambda\otimes(x,y,z)=(\lambda+x,\lambda+y,\lambda+z)\in \mathcal{T}(F_c) \quad\Longleftrightarrow\quad (x,y,z)\in\mathcal{T}(F_c),$$ we can identify $\mathcal{T}(F_c)$ with its image in the \textit{tropical projective plane} $\TP^2=\mathbb{R}^3/\mathbb{R}(1,1,1)$ and we say that $\mathcal{T}(F_c)$ is a \textit{tropical projective plane curve} with support set $\mathcal{A}$. Consider the convex hull of the points $(r_i,s_i,t_i,c_i)\in\mathbb{R}^4$ with $i\in\{1,\ldots,n\}$. Under the projection to the first three coordinates, the lower faces of this polytope map onto the convex hull $\conv(\mathcal{A})$ of $\mathcal{A}$ and give rise to the \textit{regular subdivision} $\Delta(c)$ of $\conv(\mathcal{A})$. Note that two elements $a_i,a_j\in \mathcal{A}$ are connected by a line segment in $\Delta(c)$ if and only if there exists a point $(x,y,z)\in \TP^2$ such that $$c_i+r_ix+s_iy+t_iz=c_j+r_jx+s_jy+t_jz<c_k+r_kx+s_ky+t_kz$$ for all $k\in \{1,\ldots,n\}\setminus\{i,j\}$, and that the triangle $T=\conv\{a_i,a_j,a_k\}$ is a face of $\Delta(c)$ if and only if there exists a point $(x,y,z)\in\TP^2$ such that $$c_i+r_ix+s_iy+t_iz=c_j+r_jx+s_jy+t_jz=c_k+r_kx+s_ky+t_kz<c_{\ell}+r_{\ell}x+s_{\ell}y+t_{\ell}z$$ for all $\ell\in \{1,\ldots,n\}\setminus\{i,j,k\}$. So we can see that the curve $\mathcal{T}(F_c)\subset \TP^2$ is an embedded graph which is dual to $\Delta(c)$.

Tropical plane curves going through some fixed points have been studied intensively, mainly because of its applications in enumerative algebraic geometry (see for example \cite{Mikh,GaMa}). The space of tropical plane quadrics through two or three points is examined in \cite{BrSt}. In this article, we focuss on tropical plane curves with support set $\mathcal{A}$ passing through $n-2$ points in general position. In classical algebraic geometry (and in particularly over the field $K$), the set of such curves is a linear pencil parameterized by a line in the coefficient space. The notion of a linear pencil also makes sense in the tropical setting. Indeed, let $L\subset \TP^{n-1}=\mathbb{R}^n/\mathbb{R}(1,\ldots,1)$ be a {\it tropical line} (see \cite{SpSt} for a detailed description of the Grassmannian of tropical lines). Note that $L$ is an $n$-tree where the leaves of $L$ (labelled by $1,\ldots,n$) are the points at infinity of the unbounded edges (where the leaf $i$ is lying on the ray with direction $e_i$) and it satisfies the following property: if an edge of $L$ gives rise to a partition $I$ and $I^c=\{1,\ldots,n\}\setminus I$ of the leaf set, then the direction of this edge (towards the leaves in $I$) is equal to $e_I=\sum_{i\in I}\,e_i$. Each point $c=(c_1,\ldots,c_n)\in L$ gives rise to a tropical plane curve $\mathcal{T}(F_c)$ with support $\mathcal{A}$, so we can consider $L$ as the parameter space of a {\it linear pencil} and by abuse of notation, we will also denote this linear pencil by $L$. Note that $L$ can be seen as the image under the valuation map of a linear pencil of plane curves over $K$.

The {\it fixed locus} of the linear pencil $L$ is the set of points $P\in\TP^2$ such that each curve in $L$ goes through $P$. Unlike as for linear pencils over $K$, the fixed locus of $L$ is not determined by the intersection of two different curves of $L$. The following result says that each point $P$ in the fixed locus of $L$ corresponds to some curve of $L$, for which $P$ is a special point.

\begin{theorem} \label{thm P fixed}
Let $L\subset\TP^{n-1}$ be a linear pencil of tropical plane curves with support set $\mathcal{A}=\{a_1,\ldots,a_n\}$. Then $P\in\TP^2$ is a point of the fixed locus of $L$ if and only if there exists a point $c\in L$ such that one of the following two cases holds:
\begin{enumerate}
\item[(1)] there exist elements $i,j,k,\ell\in\{1,\ldots,n\}$ such that the pairs of leaves $\{i,j\}$ and $\{k,\ell\}$ belong to different components of $L\setminus\{c\}$ and the minimum of $\{c_i+a_i\cdot P\}_{i=1,\ldots,n}$ is attained by the terms corresponding to $i,j,k,\ell$.
\item[(2)] there exist elements $i,j,k\in\{1,\ldots,n\}$ such that the leaves $i,j,k$ belong to different components of $L\setminus\{c\}$ (thus $c$ is a vertex of $L$) and the minimum of $\{c_i+a_i\cdot P\}_{i=1,\ldots,n}$ is attained by the terms corresponding to $i,j,k$.
\end{enumerate}
\end{theorem}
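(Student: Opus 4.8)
The plan is to reformulate the condition in terms of the affine functions $\phi_i(c)=c_i+a_i\cdot P$ on $L$ and their argmin set $S(c)=\{i:\phi_i(c)=\min_m\phi_m(c)\}$: the point $P$ lies in the fixed locus of $L$ exactly when $|S(c)|\ge 2$ for every $c\in L$. Writing $b_i=a_i\cdot P$, so $\phi_i=c_i+b_i$, one records that along an edge of $L$ of direction $e_I$ the function $\phi_i$ has slope $1$ if $i\in I$ and slope $0$ if $i\notin I$. The technical backbone will be a monotonicity lemma for the tree $L$: for $i\ne j$ the piecewise-linear function $c\mapsto c_i-c_j$ is strictly decreasing along the unique path from the leaf $i$ to the leaf $j$ and is constant on every branch hanging off that path. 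Consequently each set $\{c:c_i-c_j\le t\}$ is a subtree, hence each $L_m:=\{c:m\in S(c)\}=\bigcap_{k\ne m}\{c:c_m-c_k\le b_k-b_m\}$ is a (possibly empty) subtree avoiding a neighbourhood of the leaf $m$; more precisely, if $c'$ lies in the component of $L\setminus\{c_0\}$ not containing the leaf $j$, then $\phi_j(c')-\phi_m(c')\le\phi_j(c_0)-\phi_m(c_0)$ for every $m$, so $j\in S(c_0)$ forces $j\in S(c')$.

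The ``if'' direction follows at once from this last observation. Given $c$ as in case (1) or case (2) and an arbitrary $c'\in L$ lying in a component $C$ of $L\setminus\{c\}$: in case (2) at least two of the leaves $i,j,k$ lie outside $C$ (their three components being distinct), and in case (1) at least one of the pairs $\{i,j\}$, $\{k,\ell\}$ lies entirely outside $C$ (their two components being distinct); in either case the corresponding $\ge 2$ indices persist in $S(c')$, whence $|S(c')|\ge 2$ and $P$ is in the fixed locus.

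For the ``only if'' direction assume $|S(c)|\ge 2$ for all $c\in L$. On an edge of direction $e_I$ the function $\min_m\phi_m$ is the minimum of one affine function of slope $1$ (the minimum of the $\phi_i$ with $i\in I$, attained on a set $A\subseteq I$) and one of slope $0$ (attained on $B\subseteq I^c$), and these cross at a single point. If that crossing point lies in the relative interior of some edge, then $S$ equals $A$ on one side and $B$ on the other, so $|A|,|B|\ge 2$, and at the crossing point $S\supseteq A\cup B$ splits $2+2$ across the cut: this is case (1). Otherwise $S$ is constant on each open edge and equals the leaves of one of the two components into which that edge cuts $L$; orient the edge toward that component, and check, using $|S|\ge 2$ on each ray, that every leaf-edge points inward. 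If some internal vertex $v$ has two edges oriented away from it, the two corresponding $S$-sets give $\ge 2$ indices in each of two distinct components of $L\setminus\{v\}$, which is case (1). If no internal vertex has two outgoing edges, then following outgoing edges yields a path that can neither reach a leaf (that would traverse a leaf-edge against its orientation) nor cycle (as $L$ is a tree), so it ends at a vertex $v^\*$ all of whose edges point toward $v^\*$; then $S(v^\*)\supseteq\bigcup_{e\ni v^\*}S(e)$ where each $S(e)$ has size $\ge 2$ and misses the leaves of the branch beyond $e$. A short case analysis on the number of branches at $v^\*$ that meet $S(v^\*)$ — it cannot be $1$; if it is $2$ each of the relevant $S(e)$ forces two indices into the opposite branch, giving case (1); if it is $\ge 3$, one index from each of three branches gives case (2) — completes the argument.

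The main obstacle is this last, ``only if'', direction: getting the slope-and-orientation bookkeeping on edges and rays exactly right, and verifying that the sink configuration at $v^\*$ (together with the dichotomy between an interior crossing and a globally-oriented edge) always forces precisely one of the two listed cases. The ``if'' direction and the monotonicity lemma are comparatively routine once the reformulation in terms of the subtrees $L_m$ is in place.
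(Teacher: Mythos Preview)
Your argument is correct, and it takes a genuinely different route from the paper's. Both directions begin the same way: after translating by $\mathcal{A}\cdot P$, the question becomes whether the tropical line $\Gamma=L+\mathcal{A}\cdot P$ lies in the standard hyperplane $\Pi_2$, and your monotonicity/persistence observation (that $j\in S(c_0)$ persists on the side of $c_0$ away from the leaf $j$) is exactly the content of the paper's Step~2 in Lemma~\ref{lemma skeleton}. The divergence is in the ``only if'' direction. The paper proves a general structural lemma (Lemma~\ref{lemma skeleton}) valid for \emph{every} skeleton $\Pi_t$: through a seven-step argument it produces a canonical point $\pi_\Gamma$ and the inequality $\pi_\Gamma\in\Pi_{\lceil mt/(m-1)\rceil}$ for an $m$-valent $\pi_\Gamma$, and then simply evaluates $\lceil 2m/(m-1)\rceil$ at $m=2$ and $m\ge 3$ to read off cases~(1) and~(2). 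You instead work directly with $t=2$: a slope dichotomy on each edge either finds an interior crossing (immediately case~(1)) or yields a consistent orientation of every edge; the sink of that orientation is your special point, and a short fan-in count at the sink delivers case~(1) or~(2). Your approach is more elementary and self-contained for this theorem, while the paper's buys a reusable lemma about lines in arbitrary $\Pi_t$ that has independent interest. Two small points of phrasing to tighten: on an open edge $S$ is \emph{contained in} the leaf set of one side, not equal to it; and in the two-outgoing-edges case you should say explicitly that $S(e_1),S(e_2)\subseteq S(v)$ by continuity, so that the $2{+}2$ indices really are witnessed at $c=v$.
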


Let $C=\{P_1,\ldots,P_{n-2}\}$ be a configuration of $n-2$ points in $\TP^2$, with $P_k=(x_k,y_k,z_k)$. We say that $C$ is {\it general} with respect to $\mathcal{A}$ if $C$ does not lie on a tropical projective curve with support $\mathcal{A}\setminus \{a_i,a_j\}$ for any pair $i,j$. This condition can easily be checked as follows. Let $M\in\mathbb{R}^{(n-2)\times n}$ be the matrix with entries $M_{k,\ell}=a_{\ell}\cdot P_k=r_{\ell}x_k+s_{\ell}y_k+t_{\ell}z_k$. For any $i,j\in\{1,\ldots,n\}$, let $M^{(i,j)}$ be the maximal minor of $M$ that we get by erasing the $i$-th and $j$-th column of $M$. The {\it tropical determinant} of $M^{(i,j)}$ is defined as $$\tropdet(M^{(i,j)})=\bigoplus_{\sigma\in\mathcal{S}_{ij}}\left(\bigotimes_{k=1}^{n-2} M_{k,\sigma(k)}\right) = \min_{\sigma\in\mathcal{S}_{ij}}\left( \sum_{k=1}^{n-2} a_{\sigma(k)}\cdot P_k\right),$$ where $\mathcal{S}_{ij}$ is the set of bijections $\sigma:\{1,\ldots,n-2\}\to\{1,\ldots,n\}\setminus\{i,j\}$. Then $C$ is general with respect to $\mathcal{A}$ if and only if each maximal minor $M^{(i,j)}$ is {\it tropically non-singular}, i.e. the minimum in $\tropdet(M^{(i,j)})$ is attained only once (see \cite[Theorem 5.3]{RGST}).

If $C$ is general, then the set of tropical plane curves with support $\mathcal{A}$ passing through $C$ is a tropical line $L_C\subset \TP^{n-1}$ and hence a linear pencil. Note that $L_C$ is the intersection of the tropical hypersurfaces $\mathcal{T}(\mathcal{H}_j)\subset\TP^{n-1}$ (i.e. the points $(x_1,\ldots,x_n)\in\TP^{n-1}$ where the minimum in $\mathcal{H}_j$ is attained at least twice) with $\mathcal{H}_j=\min_{i=1,\ldots,n}\{a_i\cdot P_j+x_i\}$ and $j\in\{1,\ldots,n-2\}$. In case $C$ is not general, this intersection does not need to be a tropical line. Instead we can consider the stable intersection of the tropical hypersurfaces $\mathcal{T}(\mathcal{H}_j)$, where loosely speaking only the transverse intersections of the faces are taken into account. This is a tropical line $L_C$ and is called the {\it stable linear pencil} of tropical plane curves through $C$.

In \cite{RGST}, it is proved that the for each configuration $C$ of $n-2$ points the linear pencil $L_C$ is {\it compatible} with $\mathcal{A}$, i.e. if $(ij|kl)$ is a trivalent subtree of $L_C$, then the convex hull of $a_i,a_j,a_k,a_l$ has at least one of the segments $\conv(a_i,a_j)$ or $\conv(a_k,a_l)$ as an edge. We will give a proof of an inverse implication, solving an open question raised in \cite{RGST}.

\begin{theorem} \label{thm main}
Let $L$ be a tropical projective line in $\TP^{n-1}$ that is compatible with $\mathcal{A}$. Assume that $L$ is trivalent and that each trivalent vertex $v$ of $L$ corresponds to a maximal subdivision $\Delta(v)$ of $\conv(\mathcal{A})$. Then there exists a general configuration $C$ of $n-2$ points in $\TP^2$ such that $L=L_C$. In particular, each point in $C$ corresponds to a trivalent vertex of $L$ (in the sense of Theorem \ref{thm P fixed}).
\end{theorem}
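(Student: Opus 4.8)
The plan is to read a configuration $C$ off the trivalent vertices of $L$, one point per vertex, and then check that it is general, so that $L_C$ is defined and equals $L$. Fix a trivalent vertex $v$ of $L$ and let $\{I_1,I_2,I_3\}$ be the partition of $\{1,\dots,n\}$ cut out by the three components of $L\setminus\{v\}$; each part is nonempty. Since $\Delta(v)$ is a maximal subdivision of $\conv(\mathcal A)$ it is a triangulation with vertex set $\mathcal A$, and the first task is to find in it a \emph{rainbow} triangle $\conv\{a_i,a_j,a_k\}$ with $i\in I_1,\ j\in I_2,\ k\in I_3$ — this is exactly where compatibility of $L$ with $\mathcal A$ enters. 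The transparent case is a \emph{cherry}, i.e.\ $I_1=\{i\},\ I_2=\{j\}$ singletons: the quartet $(ij|kl)$ occurs in $L$ for every $k,l\notin\{i,j\}$, so compatibility forces $\conv(a_i,a_j)$ onto the boundary of $\conv(\mathcal A)$, where it lies in a unique triangle $\conv\{a_i,a_j,a_m\}$ of $\Delta(v)$, which is then the unique rainbow triangle at $v$. At a general vertex a rainbow triangle still exists but need not be unique, so the choices at the various vertices must be coordinated. Having chosen a rainbow triangle $T_v=\conv\{a_{i_v},a_{j_v},a_{k_v}\}$ for each $v$, let $P_v\in\TP^2$ be the vertex of $\mathcal T(F_v)$ dual to $T_v$: maximality of $\Delta(v)$ makes $P_v$ the unique point at which $v_{i_v}+a_{i_v}\cdot P_v=v_{j_v}+a_{j_v}\cdot P_v=v_{k_v}+a_{k_v}\cdot P_v<v_\ell+a_\ell\cdot P_v$ for all $\ell\notin\{i_v,j_v,k_v\}$. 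Since $i_v,j_v,k_v$ lie in three different components of $L\setminus\{v\}$, Theorem~\ref{thm P fixed}(2) applied with $c=v$ shows $P_v$ is in the fixed locus of $L$; hence every curve $\mathcal T(F_c)$, $c\in L$, passes through $P_v$.

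\emph{Coordinating the choices and genericity.} This is the substantial part, which I would handle by induction on $n$, the case $n=3$ being trivial. For $n\ge4$ pick a cherry $\{i,j\}$ at a vertex $v_0$; as above $\conv(a_i,a_j)$ is a boundary edge of $\conv(\mathcal A)$, so $T_{v_0}=\conv\{a_i,a_j,a_m\}$ and $P_{v_0}$ are forced. Delete the leaf $j$ from $L$ and suppress the resulting bivalent vertex; this gives a tropical line $L'\subset\TP^{n-2}$ with support $\mathcal A'=\mathcal A\setminus\{a_j\}$ whose trivalent vertices are precisely those of $L$ other than $v_0$. One checks that $L'$ is again trivalent, compatible with $\mathcal A'$ (for a quartet not involving $j$ the condition is literally unchanged), and that each subdivision at a vertex of $L'$ is again maximal (the subdivision of $\conv(\mathcal A')$ induced by the restricted heights of a generic height vector is again a triangulation). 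The inductive hypothesis then produces a general configuration $C'$ of $n-3$ points with $L_{C'}=L'$, consisting of the points $P_w$ attached to the vertices $w\ne v_0$ — this fixes the coordinated choice — and it remains to prove that $C:=C'\cup\{P_{v_0}\}$ is general with respect to $\mathcal A$ and that $L_C=L$.

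\emph{Genericity of $C$.} By \cite[Theorem~5.3]{RGST} it suffices that every maximal minor $M^{(p,q)}$ of $M=(a_\ell\cdot P_v)_{v,\ell}$ be tropically non-singular. The inequalities $a_\ell\cdot P_v\ge\mu_v-v_\ell$ (with $\mu_v$ the common value at $P_v$, equality exactly on the vertices of $T_v$) constrain the minimizing permutations of $\tropdet(M^{(p,q)})$, and expanding along the $P_{v_0}$-row — whose small entries sit only in columns $i,j,m$ — reduces the non-singularity of $M^{(p,q)}$ to that of a maximal minor of the $C'$-matrix (covered by the inductive hypothesis) together with one extra transversality coming from the cherry, namely that the bounded edge of $L$ at $v_0$ has strictly positive length, which is precisely the inequality that breaks the would-be tie. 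Pushing this induction through — propagating ``trivalent, compatible, maximal'' under the deletion, verifying that the re-inserted $P_{v_0}$ strictly desingularizes every old minor while creating no new singular one, and separately disposing of the degenerate positions of $\mathcal A$ (collinear points; points in the relative interior of $\conv(a_i,a_j)$) — is the technical core and the step I expect to be the main obstacle.

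\emph{Conclusion.} Granting genericity, the $n-2$ points of $C$ are pairwise distinct (otherwise $M$ has two rows equal up to a common constant and some $2\times2$ minor is tropically singular), so $|C|=n-2$ and $L_C$ is a genuine tropical line, namely the set of all curves with support $\mathcal A$ through $C$. By the first paragraph every $c\in L$ gives such a curve, so $L\subseteq L_C$; and two tropical lines in $\TP^{n-1}$ — each a balanced one-dimensional complex with the full recession fan $\{\mathbb{R}_{\ge0}e_1,\dots,\mathbb{R}_{\ge0}e_n\}$ — cannot be properly nested, so $L=L_C$. Finally, by construction each point of $C$ arises from a trivalent vertex of $L$ exactly as in case~(2) of Theorem~\ref{thm P fixed}, which is the last assertion.
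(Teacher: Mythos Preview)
Your overall architecture---one point $P_v$ per trivalent vertex via a rainbow triangle, then check genericity of $C=\{P_v\}$---is the paper's architecture, but your attack on genericity diverges and, as you yourself flag, does not close. Two concrete obstructions to your induction: first, after deleting the cherry leaf $j$ you need every surviving vertex $w$ of $L'$ to still induce a \emph{maximal} subdivision of $\conv(\mathcal A\setminus\{a_j\})$, and this can fail---if $a_j$ sits in the relative interior of a boundary edge of $\conv(\mathcal A)$, the two triangles of $\Delta(w)$ incident to $a_j$ may merge into a non-triangle cell when $a_j$ is forgotten. Second, the ``expand along the $P_{v_0}$-row'' step is not a valid move for tropical non-singularity: tropical determinants have no Laplace expansion that transports uniqueness of the minimum, so reducing non-singularity of $M^{(p,q)}$ to that of an $(n-3)\times(n-3)$ minor plus ``one extra transversality'' requires an argument you have not supplied (and which would have to track, for every pair $p,q$, exactly which permutations can tie).

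The paper avoids induction entirely. It first proves a self-contained combinatorial lemma: for \emph{any} partition $\mathcal A_1\cup\mathcal A_2\cup\mathcal A_3$ satisfying the quartet condition $(\maltese)$ coming from compatibility, every maximal triangulation of $\conv(\mathcal A)$ contains a rainbow triangle. This handles all vertices at once, not just cherries. For genericity it builds, for each vertex $v$, a bipartite graph $G_v\subset K_{n-2,n}$ on $\{\text{vertices of }L\}\sqcup\mathcal A$ whose edges record the equalities $v_\ell+a_\ell\cdot P_w=\min_k\{v_k+a_k\cdot P_w\}$, and shows $G_v$ is a tree ($2n-3$ edges on $2n-2$ vertices, connected). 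Given $i,j$, pick a $2$-valent $c\in L$ on the $i$--$j$ path; the graph $G_c$ drops one edge from some $G_v$, is acyclic, and a short neighbourhood-counting lemma verifies Hall's condition for $\mathcal A\setminus\{a_i,a_j\}$. Hall's Marriage Theorem then gives a matching $\psi\in\mathcal S_{ij}$, unique by acyclicity, and any other $\sigma$ uses an edge outside $G_c$, forcing a strict inequality in $\sum_k a_{\sigma(k)}\cdot P_k$; hence $\tropdet(M^{(i,j)})$ is attained only at $\psi$. This matching argument replaces your inductive reduction and treats all minors uniformly.
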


As a corollary, we have the following result.

\begin{theorem} \label{thm corollary main}
For each combinatorial type $\mathcal{T}$ of trivalent $n$-trees that are compatible with $\mathcal{A}$, we can find a general configuration $C$ such that $L_C$ is of type $\mathcal{T}$.
\end{theorem}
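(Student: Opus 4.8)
The plan is to deduce Theorem \ref{thm corollary main} directly from Theorem \ref{thm main} by showing that every combinatorial type $\mathcal{T}$ of trivalent $n$-trees compatible with $\mathcal{A}$ can be realized by some metric tree $L$ satisfying the extra hypotheses of Theorem \ref{thm main} --- namely that $L$ is trivalent (automatic from $\mathcal{T}$) and that each trivalent vertex induces a maximal (unimodular) subdivision $\Delta(v)$ of $\conv(\mathcal{A})$. Once such an $L$ is produced, Theorem \ref{thm main} yields a general configuration $C$ with $L = L_C$, and since $L$ has combinatorial type $\mathcal{T}$, so does $L_C$, which is exactly the claim.

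\medskip

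First I would parameterize the space of metric trees of type $\mathcal{T}$: fixing the tree topology, the realization as a tropical line $L \subset \TP^{n-1}$ is governed by the $n-3$ internal edge lengths (all positive) together with an overall translation in $\TP^{n-1}$; these are encoded by the leaf positions $c = (c_1,\dots,c_n)$ via the tropical Grassmannian / four-point conditions recalled in the introduction. For a vertex $v$ of $L$, the subdivision $\Delta(v)$ is the regular subdivision of $\conv(\mathcal{A})$ induced by the heights $(c_1(v),\dots,c_n(v))$, where $c(v)$ is the coordinate vector of the point $v \in \TP^{n-1}$. The condition that $\Delta(v)$ be maximal is an open condition on $c(v)$: it fails exactly when some $a_i$ lies (in the lifted picture) on the lower hull face spanned by others, i.e. on a measure-zero union of hyperplanes in height space. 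Second, I would argue that as we vary over all metric realizations of $\mathcal{T}$ (an open full-dimensional cone, nonempty because compatibility of $\mathcal{T}$ with $\mathcal{A}$ is precisely the combinatorial obstruction that Theorem in \cite{RGST} shows is the only one), the induced heights at the finitely many trivalent vertices move in general enough position that the bad hyperplane conditions can be simultaneously avoided. Concretely, a generic choice of the $n-3$ edge lengths and of the overall translation puts every $c(v)$ off the finitely many ``non-maximal'' hyperplanes at once, so a valid $L$ exists.

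\medskip

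Then I would invoke Theorem \ref{thm main} on this $L$ to obtain a general configuration $C = \{P_1,\dots,P_{n-2}\}$ with $L = L_C$. Finally, since $L_C = L$ has the prescribed topology $\mathcal{T}$ by construction, $L_C$ is of combinatorial type $\mathcal{T}$, completing the proof. The main obstacle I anticipate is the realizability step: one must be sure that for \emph{every} compatible type $\mathcal{T}$ there is at least one metric realization whose vertices all give maximal subdivisions. Compatibility guarantees, via \cite{RGST}, that each trivalent vertex $v$ is dual to \emph{some} subdivision containing the appropriate edges, but maximality (unimodularity of all cells) is a further genericity requirement; the argument that the relevant exceptional loci are lower-dimensional --- and hence jointly avoidable --- is where the care is needed, and it relies on the fact that the edge-length parameters genuinely move the heights $c(v)$ transversally to those loci rather than staying trapped inside one of them.
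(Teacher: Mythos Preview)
Your overall reduction to Theorem~\ref{thm main} is the same as the paper's, but the construction of a suitable $L$ is different. The paper does not argue by genericity: it fixes a single maximal regular subdivision $\Delta$ of $\conv(\mathcal{A})$, takes the corresponding full-dimensional open cone of the secondary fan, and places a tree of type $\mathcal{T}$ entirely inside that cone by choosing the internal edge lengths small enough; then every vertex $v$ satisfies $\Delta(v)=\Delta$, so maximality holds automatically and uniformly. Your route is a dimension count: the non-maximal locus is a codimension-one subfan of $\TP^{n-1}$, and a generic overall translation moves all $n-2$ vertex heights off it simultaneously. Both work; the paper's argument is more constructive and dispenses with the transversality concern you raise at the end, while yours avoids invoking the secondary fan. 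Two small corrections to your write-up: ``maximal'' here means a triangulation of $\conv(\mathcal{A})$ using all points of $\mathcal{A}$, not a unimodular one; and compatibility of $\mathcal{T}$ with $\mathcal{A}$ is irrelevant for realizing $\mathcal{T}$ as a tropical line in $\TP^{n-1}$ (every trivalent $n$-tree type is so realizable) --- compatibility enters only through the hypotheses of Theorem~\ref{thm main}.
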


For example, if all the points $a_1,\ldots,a_n$ are on the boundary of $\conv(\mathcal{A})$, then there are precisely $\frac{1}{n-1}{2n-4\choose n-2}$ combinatorial types of trivalent trees (out of the $(2n-5)!!=1\cdot3\cdot5\cdot\ldots\cdot(2n-5)$ in total) that are compatible with $\mathcal{A}$ (see \cite[Corollary 6.4]{RGST}) and each type can be obtained by a linear pencil $L_C$ with $C$ general.

A short outline of the rest of the paper is as follows. In Section \ref{section fixed}, we will focuss on the fixed locus of tropical linear pencils. In order to show Theorem \ref{thm P fixed}, we will give a characterization of tropical lines lying in a tropical hyperplane or one of its skeletons (see Lemma \ref{lemma skeleton}). In Section \ref{section compatible}, we will study linear pencils that are compatible with $\mathcal{A}$ and give the proofs of Theorem \ref{thm main} and \ref{thm corollary main}.

\section{Fixed locus of a linear pencil} \label{section fixed}

\begin{example} \label{ex square}
Let $\mathcal{A}=\{(0,0,2),(1,0,1),(0,1,1),(1,1,0)\}$. In Figure \ref{figure examples}, four examples of linear pencils of tropical plane curves with support set $\mathcal{A}$ are pictured. The fixed locus of $L_1$, $L_2$, $L'_2$ and $L_{\infty}$ is respectively $\{(0,0,0)\}$, $\{(0,-1,0),(1,1,0)\}$, $\{(0,0,0),(1,1,0)\}$ and the line segment $[(0,0,0),(0,1,0)]$.
\begin{figure}[ht] \label{figure examples}
\centering
\includegraphics[height=3.8cm]{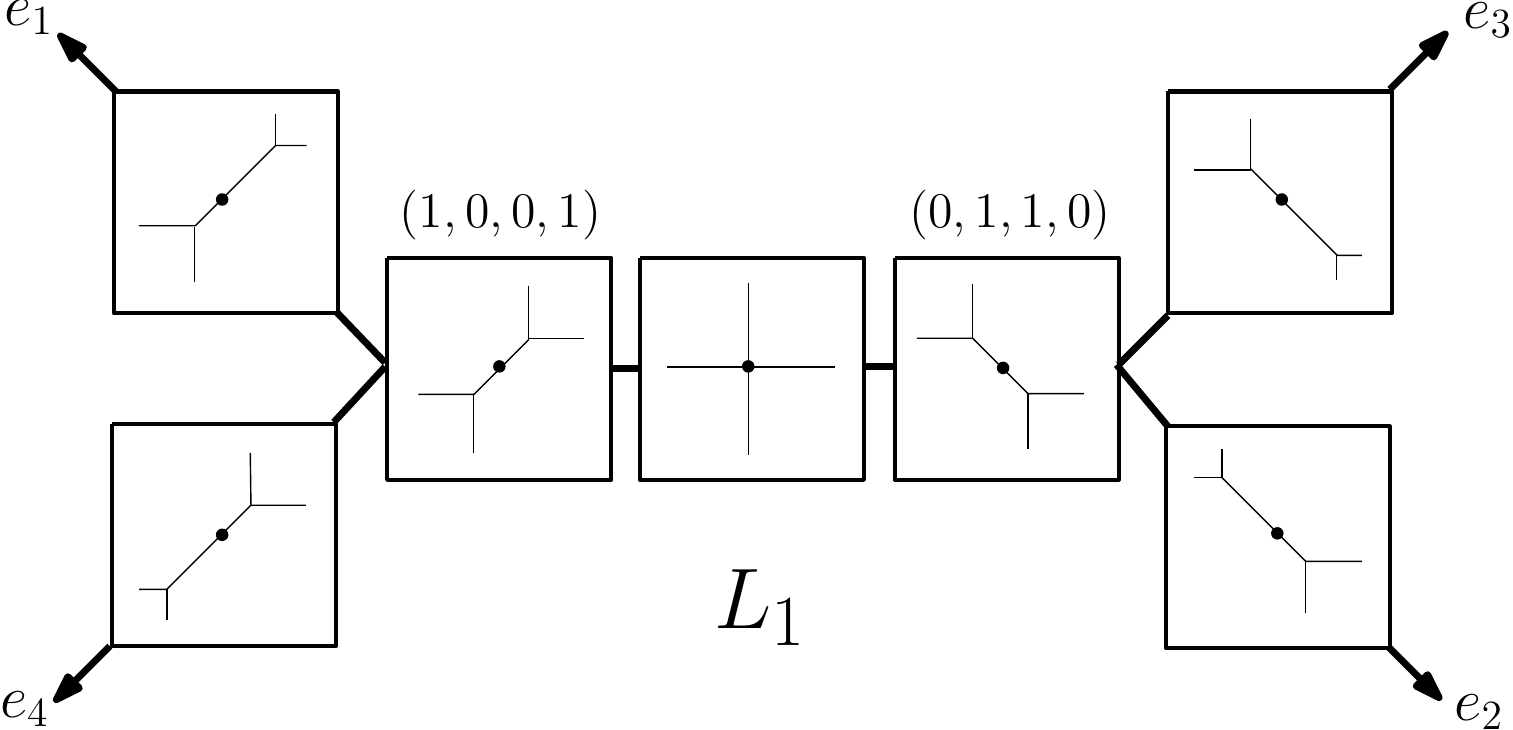} \hspace{5mm}
\includegraphics[height=3.8cm]{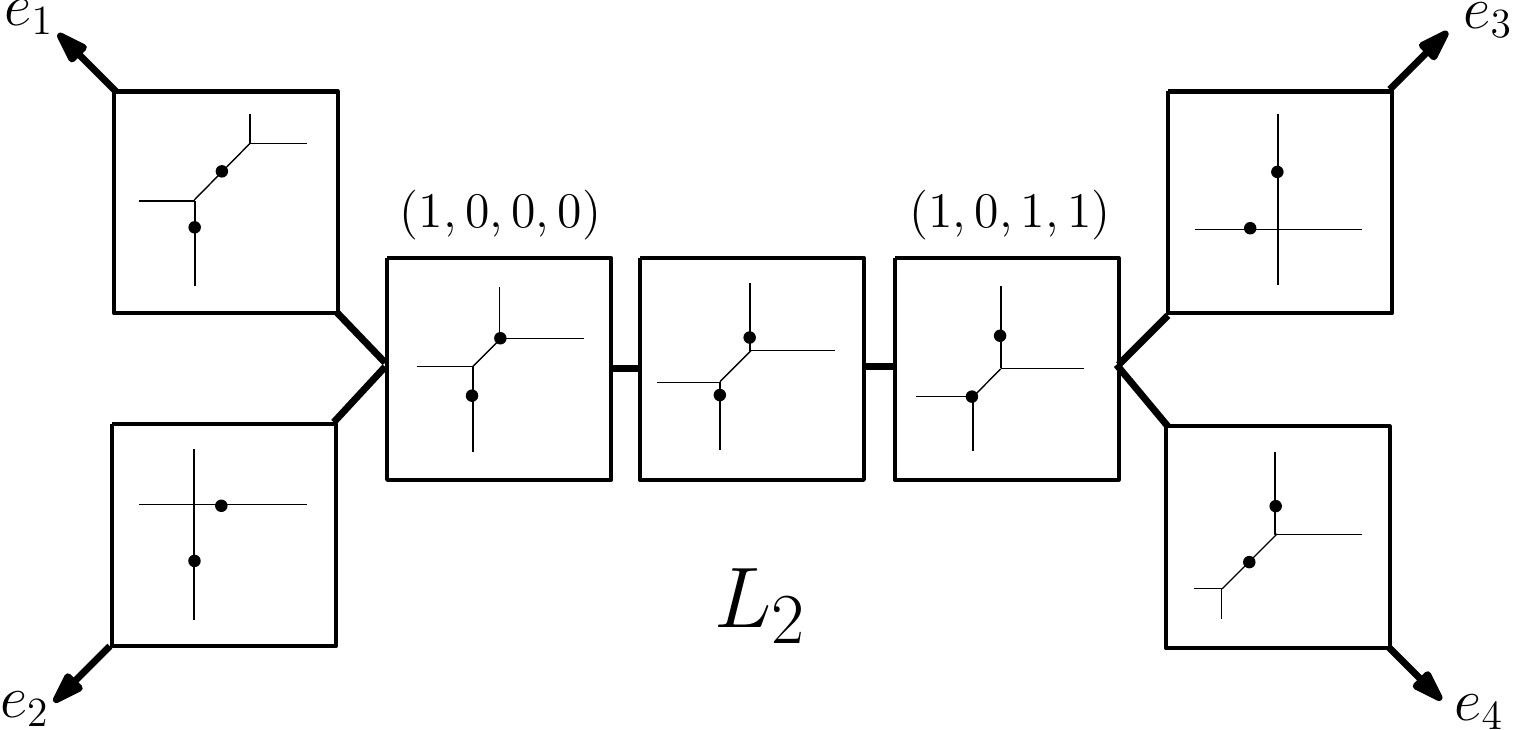} \\ \vspace{5mm}
\includegraphics[height=3.8cm]{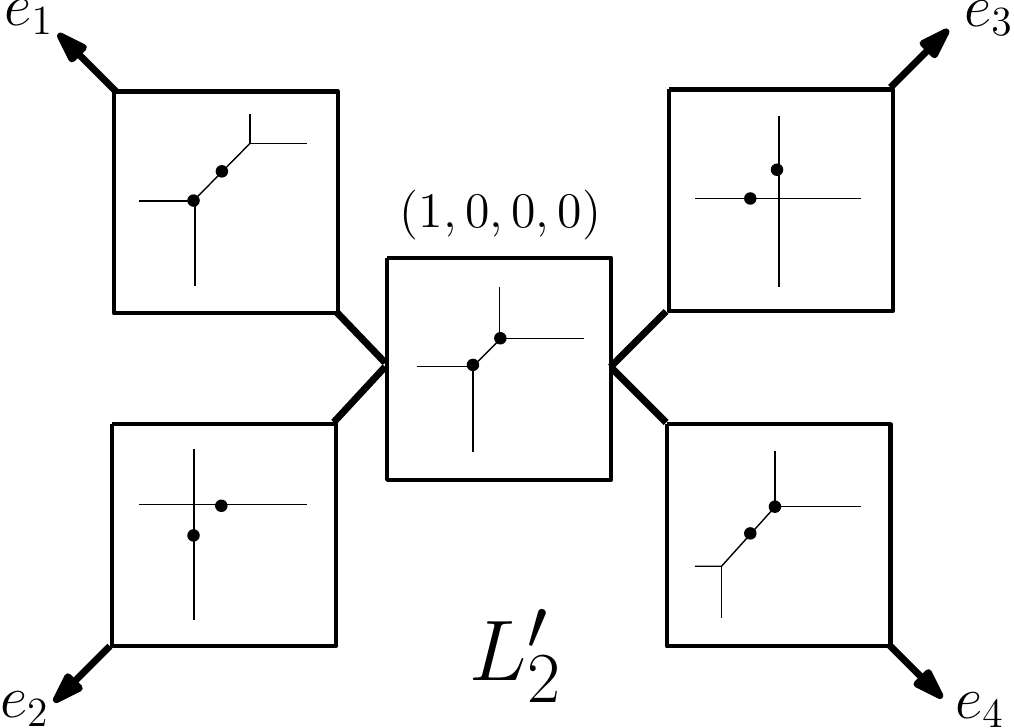} \hspace{5mm}
\includegraphics[height=3.8cm]{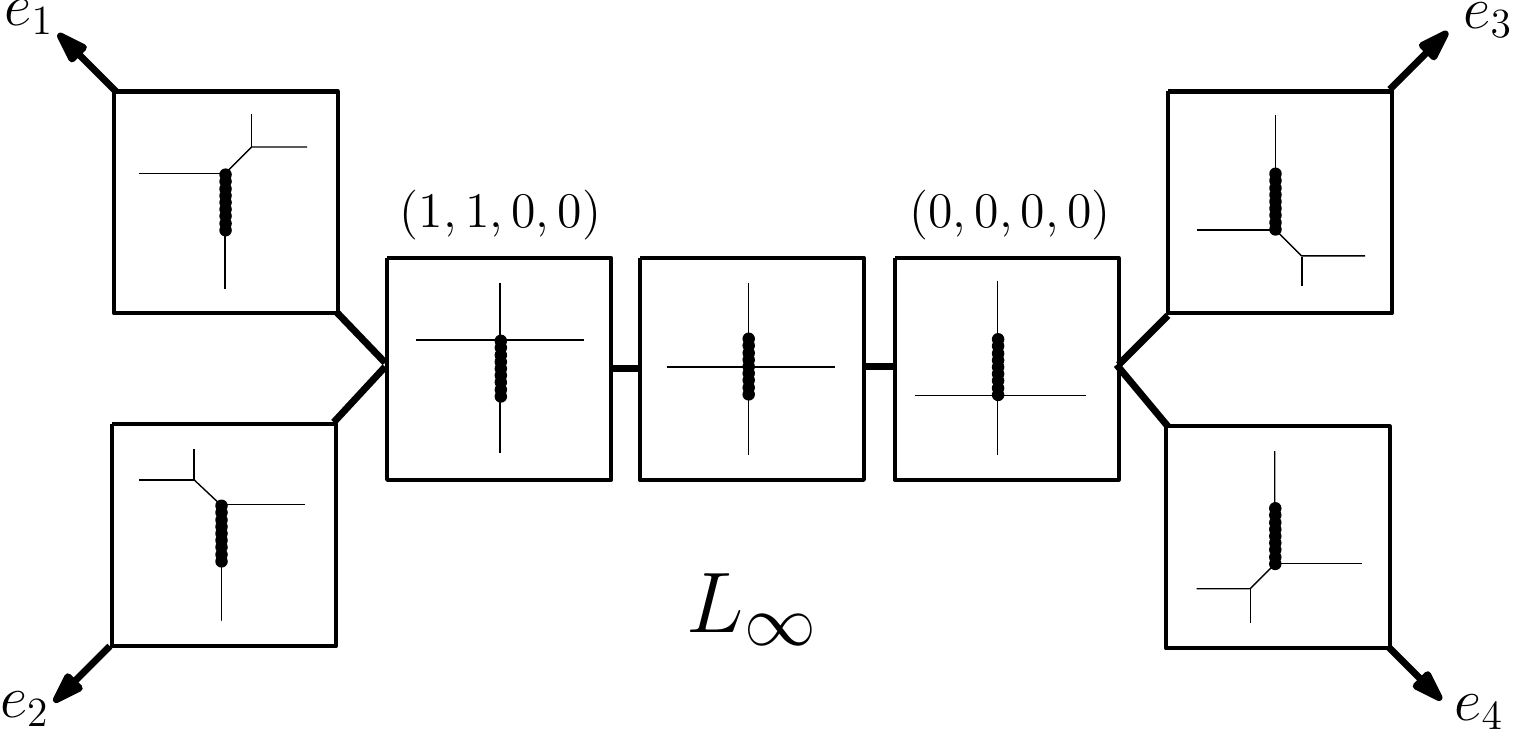}
\caption{the linear pencils $L_1, L_2, L'_2$ and $L_{\infty}$}
\end{figure}

One can see (using Theorem \ref{thm P fixed}) that the fixed locus of a linear pencil corresponding to $\mathcal{A}$ has always $1$, $2$ or infinitely many points.
\end{example}

Let $P$ be a point in the fixed locus of a linear pencil $L$ of tropical plane curves with support set $\mathcal{A}$. Then for each $c\in L\subset\TP^{n-1}$, the minimum of the set $$\{c_i+a_i\cdot P\}_{i=1,\ldots,n}=\{c_i+r_ix+s_iy+t_iz\}_{i=1,\ldots,n}$$ is attained at least twice, hence the set $L+\mathcal{A}\cdot P:=\{c+\mathcal{A}\cdot P\,|\,c\in L\}$ (where $\mathcal{A}\cdot P=(a_i\cdot P)_{i=1,\ldots,n}\in \TP^{n-1}$) is contained in the standard hyperplane $\mathcal{T}(x_1\oplus\ldots\oplus x_n)\subset\TP^{n-1}$. Note that $L+\mathcal{A}\cdot P$ is a translation of $L$, so it is also a tropical line in $\TP^{n-1}$.

If $t\in \{1,\ldots,n\}$, let $\Pi_t\subset \TP^{n-1}$ be the subset consisting of points $(x_1,\ldots,x_n)$ such that $\min_{i=1}^n \{x_i\}$ is attained at least $t$ times. Note that $\Pi_t$ is the $(n-t)$-dimensional skeleton of the standard tropical hyperplane $\Pi_2=\mathcal{T}(x_1\oplus \ldots \oplus x_n)\subset \TP^{n-1}$ (if $t\geq 2$) and that $\Pi_1=\TP^{n-1}$. In the following lemma, we examine tropical lines $L$ lying in some $\Pi_t$.

\begin{lemma} \label{lemma skeleton}
Let $\Gamma$ be a tropical line in $\TP^{n-1}$. If $I\subset\{1,\ldots,n\}$, let $$\Pi(I)=\{(x_1,\ldots,x_n)\in \TP^{n-1}\,|\,x_i=\min_{j=1}^n x_j \text{ for all }i\in I\}$$ and $\Pi(\Gamma,I)=\Gamma\cap \Pi(I)$.
\begin{enumerate}
\item[(a)]
There exists a point $\pi_{\Gamma}\in \Gamma$ such that $\Pi(\Gamma,I)$ is the union of $\{\pi_{\Gamma}\}$ with the components of $\Gamma\setminus\{\pi\}$ having no leaf in $I$, for each subset $I\subset\{1,\ldots,n\}$ with $\Pi(\Gamma,I)\neq\emptyset$. Moreover, if $\Gamma\subset \Pi_t$ and if $\pi$ is an $m$-valent point of $\Gamma$, then $\pi\in \Pi_{\left\lceil \frac{m}{m-1}t\right\rceil}$.
\item[(b)]
Let $p$ be an $m$-valent point of $\Gamma$ and let $C_1,\ldots,C_m$ be the components of $\Gamma\setminus\{p\}$. Assume that $p\in\Pi(\Gamma,I)$ for some subset $I\subset\{1,\ldots,n\}$ and that $|I\setminus I_j|\geq t$ for all $j\in\{1,\ldots,m\}$, where $t\geq 1$ is an integer and $I_j=I\cap C_j$. Then we have that $p=\pi_{\Gamma}$ and $\Gamma\subset \Pi_t$.
\end{enumerate}
\end{lemma}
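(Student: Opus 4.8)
The plan is to analyse how the coordinates vary along $\Gamma$ and to realise $\pi_\Gamma$ as the minimiser of an explicit functional. The basic tool is a slope lemma. For $\ell\neq\ell'$ the difference $g_{\ell\ell'}:=x_\ell-x_{\ell'}$ is a well-defined function on $\Gamma\subset\TP^{n-1}$ (the individual $x_\ell$ are not). By the description of tropical lines recalled above, an edge realizing the partition $J\,|\,J^c$ points in direction $e_J$ towards the $J$-side, so along such an edge $g_{\ell\ell'}$ has slope $+1$, $-1$ or $0$ according as $\ell\in J\not\ni\ell'$, $\ell'\in J\not\ni\ell$, or $\ell,\ell'$ lie on the same side; hence $g_{\ell\ell'}$ is strictly monotone (from $+\infty$ to $-\infty$) along the path $P_{\ell\ell'}$ joining the two leaves and is constant on each component of $\Gamma\setminus P_{\ell\ell'}$. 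Writing $L(C)\subset\{1,\dots,n\}$ for the leaf set of a component $C$ of $\Gamma\setminus\{q\}$, this gives that on moving from $q$ into $C$, each $g_{\ell'\ell}$ with $\ell\in L(C)$, $\ell'\notin L(C)$ is non-increasing and strictly decreases once one leaves $q$ (the edge out of $q$ into $C$ has exactly $L(C)$ on its far side), which yields the ``easy inclusion'': if $q\in\Pi(\Gamma,I)$ and $L(C)\cap I=\emptyset$ then $C\subset\Pi(\Gamma,I)$.

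Next I would locate $\pi_\Gamma$. Put $h(q):=\sum_\ell x_\ell(q)-n\min_\ell x_\ell(q)=\sum_\ell\bigl(x_\ell(q)-\min_{\ell'}x_{\ell'}(q)\bigr)\ge 0$; this is well-defined on $\TP^{n-1}$, continuous, piecewise linear, and tends to $+\infty$ along every unbounded ray of $\Gamma$, so it attains a minimum. Call $q$ a \emph{core point} if, with $\mu(q):=\{\ell:x_\ell(q)=\min_{\ell'}x_{\ell'}(q)\}$, one has $\mu(q)\not\subset L(C)$ for every component $C$ of $\Gamma\setminus\{q\}$. Two claims: (i) every minimiser of $h$ is a core point; (ii) there is at most one core point. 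For (i): if $\mu(q)\subset L(C)$ for some $C$, then moving a small distance $\varepsilon$ from $q$ into $C$ raises $\sum_\ell x_\ell$ by $|L(C)|\varepsilon$ and $\min_\ell x_\ell$ by exactly $\varepsilon$, so $h$ decreases by $(n-|L(C)|)\varepsilon>0$. For (ii): the slope lemma shows that for a core point $\pi$ and a component $C$ of $\Gamma\setminus\{\pi\}$ one has $\mu(q)=\mu(\pi)\setminus L(C)$ for all $q\in C$ — a fixed $\ell^{*}\in\mu(\pi)\setminus L(C)$ keeps attaining the minimum while every $\ell\in\mu(\pi)\cap L(C)$ leaves it at once — so $\bigcup_q\mu(q)=\mu(\pi)=:I_{\max}$ is the same for all core points; if $\pi\neq\pi'$ were two of them, writing $\pi'$ in a component $C$ of $\Gamma\setminus\{\pi\}$ gives $I_{\max}=\mu(\pi')=\mu(\pi)\setminus L(C)$, so $I_{\max}\cap L(C)=\emptyset$, and symmetrically $I_{\max}\cap L(C')=\emptyset$ for the component $C'$ of $\Gamma\setminus\{\pi'\}$ containing $\pi$, and since $L(C)\cup L(C')=\{1,\dots,n\}$ this forces $I_{\max}=\emptyset$, contradicting $\mu(\pi)\neq\emptyset$. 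Hence there is a unique core point $\pi_\Gamma$, it is the unique minimiser of $h$, $\mu(\pi_\Gamma)=I_{\max}$, and $\Pi(\Gamma,I)\neq\emptyset$ exactly when $I\subset I_{\max}$.

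Now the conclusions assemble. The easy inclusion gives $\Pi(\Gamma,I)\supset\{\pi_\Gamma\}\cup\bigcup\{C:L(C)\cap I=\emptyset\}$; conversely if $q\in\Pi(\Gamma,I)$, $q\neq\pi_\Gamma$, then $q$ lies in a component $C$ of $\Gamma\setminus\{\pi_\Gamma\}$ with $\mu(q)=I_{\max}\setminus L(C)$, so $I\subset\mu(q)$ forces $L(C)\cap I=\emptyset$; this is the stated description of $\Pi(\Gamma,I)$. For the ``moreover'': if $\Gamma\subset\Pi_t$ then $|\mu(q)|\ge t$ everywhere, so applying this inside each of the $m$ components $C_1,\dots,C_m$ of $\Gamma\setminus\{\pi_\Gamma\}$ gives $|I_{\max}|-|I_{\max}\cap L(C_j)|\ge t$; summing over $j$ with $\sum_j|I_{\max}\cap L(C_j)|=|I_{\max}|$ yields $(m-1)|I_{\max}|\ge mt$, i.e. $|\mu(\pi_\Gamma)|=|I_{\max}|\ge\left\lceil\frac{m}{m-1}t\right\rceil$, so $\pi_\Gamma\in\Pi_{\left\lceil\frac{m}{m-1}t\right\rceil}$. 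For (b): the description forces $p=\pi_\Gamma$, since if $p$ lay in a component $D$ of $\Gamma\setminus\{\pi_\Gamma\}$ with $L(D)\cap I=\emptyset$, the component $C_{j_0}$ of $\Gamma\setminus\{p\}$ containing $\pi_\Gamma$ would satisfy $I\subset L(D)^{c}\subset L(C_{j_0})$, so $I_{j_0}=I$ and $|I\setminus I_{j_0}|=0<t$, a contradiction; and then, with $p=\pi_\Gamma$, every $q\in\Gamma$ has $\mu(q)\supset I_{\max}\setminus L(C_j)\supset I\setminus I_j$ where $q\in C_j$ (and $\mu(\pi_\Gamma)=I_{\max}\supset I$ for $q=\pi_\Gamma$), hence $|\mu(q)|\ge|I\setminus I_j|\ge t$ and $\Gamma\subset\Pi_t$.

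The hard part is the middle paragraph: guessing the functional $h$ and proving claims (i) and (ii) (its minimiser is a core point, and the core point is unique). Everything else is careful sign-bookkeeping with the slope lemma, where one must keep in mind that only the differences $x_\ell-x_{\ell'}$, not the individual coordinates, make sense on $\TP^{n-1}$, so all the estimates have to go through the $g_{\ell\ell'}$.
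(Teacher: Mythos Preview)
Your proof is correct and takes a genuinely different route from the paper's. The paper proceeds by defining, for each $I$ with $\Pi(\Gamma,I)\neq\emptyset$, the point $\pi(I)\in\Pi(\Gamma,I)$ closest to the spanning subtree $\Gamma_I$, and then spends five preparatory steps showing that $\pi(I)$ is well-defined, that $\Pi(\Gamma,I)$ is connected, and finally that $\pi(I)$ does not depend on $I$; only after this does the description of $\Pi(\Gamma,I)$ and the counting argument for the ``moreover'' fall out. You instead locate $\pi_\Gamma$ in one stroke as the unique minimiser of the convex functional $h(q)=\sum_\ell(x_\ell-\min_{\ell'}x_{\ell'})$, identify it with the unique \emph{core point}, and extract the single structural formula $\mu(q)=\mu(\pi_\Gamma)\setminus L(C)$ for $q$ in a component $C$ of $\Gamma\setminus\{\pi_\Gamma\}$; everything else (the description of $\Pi(\Gamma,I)$, the bound $|\mu(\pi_\Gamma)|\ge\lceil\tfrac{m}{m-1}t\rceil$, and part~(b)) is then immediate bookkeeping. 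Your variational definition makes the existence and uniqueness of $\pi_\Gamma$ transparent and avoids the case analysis in the paper's Steps~4 and~5; the paper's approach, on the other hand, never needs to introduce an auxiliary functional and stays closer to the intrinsic combinatorics of the subtrees $\Gamma_I$. Both arguments ultimately rest on the same slope behaviour of the coordinate differences along edges of $\Gamma$, but your packaging via the core-point formula is cleaner and yields the global statement $\mu(q)=I_{\max}\setminus L(C)$ directly, which the paper only obtains implicitly.
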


\begin{proof}
This proof is subdivided into seven steps. 
%
%
Denote by $\Gamma_I$ the minimal subtree of $\Gamma$ containing the leaves in $I$. \\

\noindent \textsf{Step 1}: $\Gamma_I\cap \Pi(\Gamma,I)$ is at most a singleton. \\
\textsf{Proof of step 1}:
Assume that $x$ and $y$ are two different points in the intersection $\Gamma_I\cap \Pi(\Gamma,I)$. Note that this implies that $I$ is not a singleton. We can find $i$ and $j$ in $I$ such that the path between the leaves $i$ and $j$ passes through $x$ and $y$ (and assume $x$ is closest to the leaf $i$). We can take tropical coordinates such that $x_i=y_i$ and $x_j<y_j$. This is in contradiction with the equalities $x_i=x_j$ and $y_i=y_j$ since $x,y\in \Pi(\Gamma,I)$. \hfill $\diamond$ \\

\noindent \textsf{Step 2}: Let $x\in \Pi(\Gamma,I)$ and denote by $\gamma(I)\in \Gamma_I$ the point that is closest to $x$. If $y\in \Gamma$ and the path between $y$ and $\gamma(I)$ passes through $x$, then also $y\in \Pi(\Gamma,I)$. \\
\textsf{Proof of step 2}:
We can take tropical coordinates of $x$ and $y$ such that $$\left\{\begin{array}{l l} y_i = x_i & \text{for all } i\in I\\ y_i=x_i+\epsilon_i \text{ with }\epsilon_i\geq 0& \text{for all } i\not\in I\end{array}\right.$$ Then $y_i=x_i=x_j=y_j$ for all $i,j\in I$ and $y_i=x_i\leq x_j\leq x_j+\epsilon_j=y_j$ for all $i\in I$ and $j\not\in I$, hence $y\in \Pi(\Gamma,I)$. \hfill $\diamond$ \\

\noindent \textsf{Step 3}: The set $\Pi(\Gamma,I)$ is connected and $\gamma(I)\in \Gamma_I$ is independent of $x\in \Pi(\Gamma,I)$. \\
\textsf{Proof of step 3}:
First we prove that $\gamma(I)\in \Gamma_I$ is independent of the choice of the point in $\Pi(\Gamma,I)$. So let $x_1,x_2\in \Pi(\Gamma,I)$ and denote by $y_1$ and $y_2$ the points in $\Gamma_I$ closest to respectively $x_1$ and $x_2$. Assume that $y_1\neq y_2$. There exist elements $i$ and $j$ in $I$ such that $y_1$ and $y_2$ are contained in the path between the leaves $i$ and $j$ of $\Gamma$ (and assume $y_1$ is closest to the leaf $i$). We can take tropical coordinates of $x_1,x_2,y_1,y_2\in\TP^{n-1}$ such that $$(x_1)_i=(x_2)_i=(y_1)_i=(y_2)_i$$ and $$(x_1)_j=(y_1)_j<(x_2)_j=(y_2)_j.$$ This is in contradiction with $x_1,x_2\in\Pi(\Gamma,I)$.

Now let $x_1,x_2\in \Pi(\Gamma,I)$. We are going to prove that the path between $x_1$ and $x_2$ is contained in $\Pi(\Gamma,I)$ as well. For this, it is enough to show that the point $x$ on the path between $x_1$ and $x_2$ that is closest to $\gamma(I)$ is contained in $\Pi(\Gamma,I)$ (using Step 2). Let $K_i$ (for $i\in\{1,2\}$) be the leaves in the component of $\Gamma\setminus\{x\}$ containing $x_i$ and let $K=\{1,\ldots,n\}\setminus(K_1\cup K_2)$. Note that $I\subset K$. We can take tropical coordinates of $x,x_1,x_2$ such that $$\left\{\begin{array}{l l} (x_1)_k = x_k & \text{for all } k\in K\cup K_2\\ (x_1)_k=x_k+\epsilon_k \text{ with }\epsilon_k\geq 0& \text{for all } k\in K_1\end{array}\right.$$ and $$\left\{\begin{array}{l l} (x_2)_k = x_k & \text{for all } k\in K\cup K_1\\ (x_2)_k=x_k+\epsilon'_k \text{ with }\epsilon'_k\geq 0& \text{for all } k\in K_2\end{array}\right..$$ Let $i\in I$. Then we have that $$\left\{\begin{array}{l l} x_k=(x_1)_k=(x_2)_k=(x_1)_i=(x_2)_i=x_i & \text{if } k\in I\\ x_k=(x_1)_k=(x_2)_k\geq (x_1)_i=(x_2)_i=x_i & \text{if } k\in K\setminus I \\ x_k=(x_2)_k\geq (x_2)_i=x_i & \text{if } k\in K_1 \\ x_k=(x_1)_k\geq (x_1)_i=x_i & \text{if } k\in K_2 \end{array}\right.$$ and hence $x\in \bigcap_{i\in I}\,\Pi(\Gamma,\{i\})=\Pi(\Gamma,I)$.  \hfill $\diamond$ \\

If $I\subset\{1,\ldots,n\}$ and $\Pi(\Gamma,I)$ is non-empty, we denote by $\pi(I)$ the point in $\Pi(\Gamma,I)$ closest to $\gamma(I)$. The existence of such a point follows from Step 3. \\

\noindent \textsf{Step 4}: If $I'\subset I\subset \{1,\ldots,n\}$ with $I'\neq \emptyset$ and $\Pi(\Gamma,I)\neq\emptyset$, then we have that $\pi(I')=\pi(I)$. \\
\textsf{Proof of step 4}:
First, we give a description of the set $\Pi(\Gamma,\{i\})$ for $i\in I$, where we need to consider the cases $\pi(I)=\gamma(I)$ and $\pi(I)\neq \gamma(I)$ separately (see Figure \ref{figure two cases}). Note that $\Pi(\Gamma,\{i\})\neq \emptyset$ since $\Pi(\Gamma,I)\subset\Pi(\Gamma,\{i\})$.

\begin{figure}[ht] \label{figure two cases}
\centering
\includegraphics[height=3.8cm]{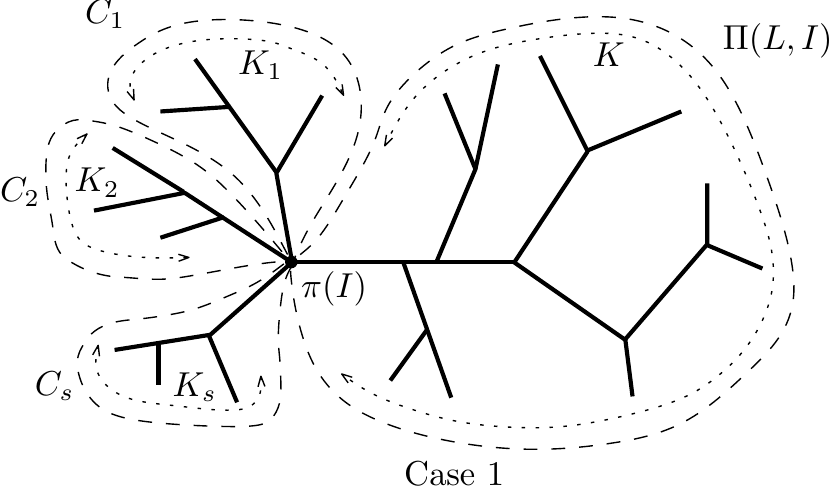} \hspace{1cm} \includegraphics[height=3.8cm]{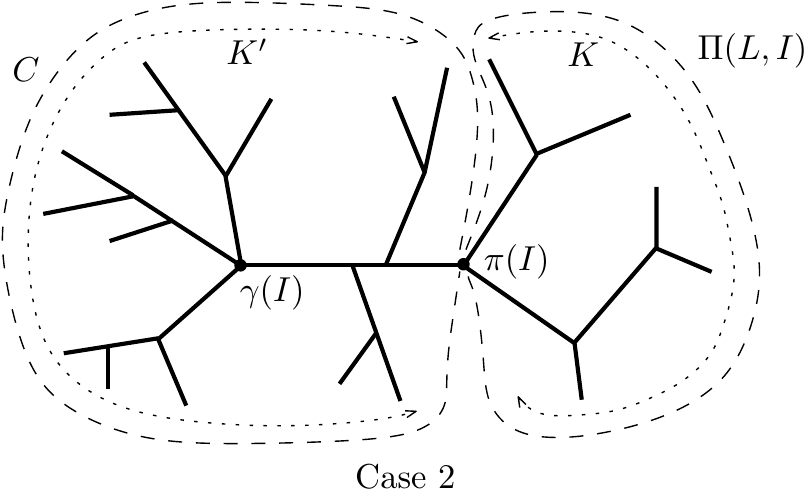}
\caption{the two cases in Step 4 of Lemma \ref{lemma skeleton}(f)}
\end{figure}

In the first case (so $\pi(I)=\gamma(I)$), let $C_1,\ldots,C_s$ be the different components of $\Gamma\setminus \Pi(\Gamma,I)$. Let $K_j$ be the set of leaves in $C_j$ and $K$ the set of leaves in $\Pi(\Gamma,I)$. Note that $I\subset K_1\cup \ldots \cup K_s$, $I\cap K_j\neq \emptyset$ (by Step 2) and $s>1$ (since $\Gamma_I$ is a subtree). Let $i\in K_j\cap I$. From Step 2, it follows that $$\Pi(\Gamma,I)\cup\bigcup_{r\neq j}C_r\subset \Pi(\Gamma,\{i\})$$ since $\pi(I)\in\Pi(\Gamma,\{i\})$. Assume that this inclusion is strict, hence there exists a point $x\in C_j$ such that $x\in \Pi(\Gamma,\{i\})$. Using Steps 2 and 3, we may assume that $x$ is on the path between the leaf $i$ and $\pi(I)$. Take $i'\in I$ such that the leaf $i'$ is not contained in $K_j$, hence $x$ and $\pi(I)$ are contained in $\Gamma_{\{i,i'\}}\cap \Pi(\Gamma,\{i,i'\})$, which is in contradiction with Step 1. So we have that $\Pi(\Gamma,\{i\})=\Pi(\Gamma,I)\cup \bigcup_{r\neq j} C_r$.

In the second case (so $\pi(I)\neq\gamma(I)$), let $C=\Gamma\setminus \Pi(\Gamma,I)$ and denote the set of leaves in $C$ and $\Pi(\Gamma,I)$ by respectively $K'$ and $K$. Note that $I\subset K'$. If $i\in I$ and $\Pi(\Gamma,\{i\})\neq \Pi(\Gamma,I)$, consider a point $x\in \Pi(\Gamma,\{i\})\setminus\Pi(\Gamma,I)$ sufficiently close to $\pi(I)$ (to be precise, let $x$ be an inner point of a finite edge of $\Gamma$ which also contains $\pi(\Gamma)$). We can take tropical coordinates of $x$ and $\pi(I)$ such that $$\left\{\begin{array}{l l} \pi(I)_k=x_k & \text{for all } k\in K' \\ \pi(I)_k=x_k+\epsilon & \text{for all } k\in K \end{array}\right..$$ If $k\in I$, then $x_k=\pi(I)_k=\pi(I)_i=x_i$, hence $x\in\Pi(\Gamma,I)$, a contradiction. Hence, $\Pi(\Gamma,\{i\})=\Pi(\Gamma,I)$.

In both cases, we have that $\pi(\{i\})=\pi(I)$, which implies the equality $\pi(I')=\pi(I)$ for each non-empty subset $I'\subset I$. \hfill $\diamond$ \\

\noindent \textsf{Step 5}: the point $\pi(I)$ is independent of the set $I\subset\{1,\ldots,n\}$ with $\Pi(\Gamma,I)\neq 0$. \\
\textsf{Proof of step 5}:
We may assume that $\Gamma\subset \Pi_t$ but $\Gamma\not\subset \Pi_{t+1}$ for some $t$.
Take $J\subset\{1,\ldots,n\}$ with $|J|=t$ such that $|\Pi(\Gamma,J)|>1$ and such that $\Pi(\Gamma,J)\subset\Pi(\Gamma,\{k\})$ implies $k\in J$. Consider an edge of $\Gamma$ containing $\pi(J)$ that is not fully contained in $\Pi(\Gamma,J)$ and consider points on this edge close to $\pi(J)$ but not in $\Pi(\Gamma,J)$. These points must be contained in some set $\Pi(\Gamma,K)$ with $|K|=t$ and $K\neq J$, so they must be contained in a set $\Pi(\Gamma,\{k\})$ with $k\not\in J$. Since the set $\Pi(\Gamma,\{k\})$ is closed, we get that $\pi(J)\in\Pi(\Gamma,\{k\})$. If the leaf $k$ would not be contained in $\Pi(\Gamma,J)$, this would imply that $\Pi(\Gamma,\{k\})\supset\Pi(\Gamma,J)$ using Step 2, hence $k\in J$, a contradiction. So $k\in\Pi(\Gamma,J)$ and again Step 2 implies $\Gamma=\Pi(\Gamma,J)\cup\Pi(\Gamma,\{k\})$. On the other hand, since $$\{\pi(J)\}\subset\Pi(\Gamma,J)\cap\Pi(\Gamma,\{k\})=\Pi(\Gamma,J\cap\{k\}),$$ we can use Step 4 and see that $\pi(J\cup\{k\})=\pi(J)=\pi(\{k\})$.
Now let $I\subset\{1,\ldots,n\}$ be a subset such that $\Pi(\Gamma,I)\neq\emptyset$. Since $\Pi(\Gamma,I\cup J)=\Pi(\Gamma,I)\cap\Pi(\Gamma,J)$ or $\Pi(\Gamma,I\cup\{k\})=\Pi(\Gamma,I)\cap\Pi(\Gamma,\{k\})$ is non-empty, we have that either $\pi(I)=\pi(J)$ or $\pi(I)=\pi(\{k\})$ by Step 4, hence $\pi(I)=\pi(J)=\pi(\{k\})$.  \hfill $\diamond$ \\

\noindent \textsf{Step 6}: part (a) \\
\textsf{Proof of step 6}:
The existence of a point $\pi_{\Gamma}$ follows from Step 5. Now assume that $\pi_{\Gamma}$ is $m$-valent and that $\Gamma\subset \Pi_t$. Denote the components of $\Gamma\setminus\{\pi_{\Gamma}\}$ by $C_1,\ldots,C_m$. If $r\in\{1,\ldots,m\}$, let $I_r$ be the set of leaves $i$ in $C_r$ with $\Pi(\Gamma,\{i\})\neq\emptyset$, and let $\alpha_r=|I_r|$. Note that $\pi_{\Gamma}\in\Pi(\Gamma,\bigcup_{r=1}^m I_r)$, so it suffices to show that $\alpha_1+\ldots+\alpha_m\geq \frac{m}{m-1}t$. Since $C_r\not\subset \Pi(\Gamma,\{i\})$ for each $i\in I_r$, we have that $(\alpha_1+\ldots+\alpha_m)-\alpha_r\geq t$. Taking the sum of these equations for all $r\in\{1,\ldots,m\}$, we get that $(m-1)(\alpha_1+\ldots+\alpha_m)\geq mt$ and the statement follows. \hfill $\diamond$ \\

\noindent \textsf{Step 7}: part (b) \\
\textsf{Proof of step 7}:
Assume that $p\neq \pi_{\Gamma}$ en let $C_j$ be the component containing $\pi_{\Gamma}$. Since $|I\setminus I_j|\geq t\geq 1$, it follows that $I\setminus I_j$ is non-empty, so we can find an element $i\in I_k$ with $k\neq j$. Then (a) implies that $p\not\in\Pi(\Gamma,\{i\})$, thus $p\not\in \Pi(\Gamma,I)$, a contradiction. In order to show that $\Gamma\subset\Pi_t$, note that $C_j\cup\{p\}\subset\Pi(\Gamma,I\setminus I_j)$ with $|I\setminus I_j|\geq t$. \hfill $\diamond$
\end{proof}

We can apply the above lemma in the context of tropical linear pencils.

\begin{proof}[Proof of Theorem \ref{thm P fixed}]
We already showed that $P$ is contained in the fixed locus of $L$ if and only if $\Gamma=L+\mathcal{A}\cdot P$ is contained in the standard tropical hyperplane $\Pi_2\subset\TP^{n-1}$. By Lemma \ref{lemma skeleton}, this is equivalent with the existence of a point $\pi_{\Gamma}\in \Gamma$ such that one of the following two cases holds: either $\pi_{\Gamma}\in\Pi(\Gamma,\{i,j,k,\ell\})$ where the pairs of leaves $\{i,j\}$ and $\{k,\ell\}$ are contained in different components of $\Gamma\setminus\{\pi_{\Gamma}\}$, or $\pi_{\Gamma}\in\Pi(\Gamma,\{i,j,k\})$ where the leaves $i,j,k$ are contained in different components of $\Gamma\setminus\{\pi_{\Gamma}\}$. Note that $$\left\lceil \frac{2m}{m-1}\right\rceil=\left\{\begin{array}{l l} 4 & \text{if } m=2\\ 3 & \text{if } m\geq 3 \end{array}\right..$$ We conclude that $c=\pi_{\Gamma}-\mathcal{A}\cdot P\in L$ satisfies the conditions of the theorem.
\end{proof}

\begin{proposition}
Let $L\subset\TP^{n-1}$ be the stable linear pencil of tropical plane curves with support set $\mathcal{A}$ through the points $P_1,\ldots,P_{n-2}\in\TP^2$. Then for each vertex $v$ of $L$ there exists a point $P_j$ such that the minimum of $\{v_i+a_i\cdot P_j\}_{i=1,\ldots,n}$ is attained at least $3$ times.
\end{proposition}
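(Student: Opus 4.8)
The plan is to relate each vertex $v$ of the stable linear pencil $L$ to the fixed locus characterization already obtained in Theorem~\ref{thm P fixed}, and then to exploit the defining property of stable intersection. First I would recall that $L = L_C$ is the stable intersection of the tropical hypersurfaces $\mathcal{T}(\mathcal{H}_j)$, $j=1,\ldots,n-2$, where $\mathcal{H}_j = \min_i\{a_i\cdot P_j + x_i\}$. A vertex $v$ of $L$ is a $0$-dimensional cell of this stable intersection. The key observation is that each $P_j$ lies in the fixed locus of $L$: by construction every $c \in L$ satisfies that the minimum of $\{c_i + a_i\cdot P_j\}_i$ is attained at least twice, since $c \in \mathcal{T}(\mathcal{H}_j)$ (this holds on the nose for the stable intersection, not just for general $C$). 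Hence, applying Theorem~\ref{thm P fixed} with $P = P_j$, the point $c = v$ satisfies case (1) or case (2); but I want to rule out case (1) at a vertex, or rather show that \emph{at a vertex} one of the $P_j$ forces case (2).

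The second step is a dimension/transversality count. Consider the translate $\Gamma_j = L + \mathcal{A}\cdot P_j$, which by the discussion preceding Lemma~\ref{lemma skeleton} is a tropical line contained in $\Pi_2$. By Lemma~\ref{lemma skeleton}(a) there is a distinguished point $\pi_{\Gamma_j} \in \Gamma_j$, and correspondingly a point $c^{(j)} := \pi_{\Gamma_j} - \mathcal{A}\cdot P_j \in L$; the minimum of $\{c^{(j)}_i + a_i\cdot P_j\}_i$ is attained at least $3$ times exactly when $c^{(j)}$ is a vertex of $\Gamma_j$ of valence $\geq 3$ (by the ceiling computation $\lceil\frac{2m}{m-1}\rceil = 3$ for $m\geq 3$), and only twice when $\pi_{\Gamma_j}$ lies in the interior of an edge of $\Gamma_j$. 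Since $\Gamma_j$ is a translate of $L$, its distinguished point $\pi_{\Gamma_j}$ corresponds to a well-defined point $c^{(j)} \in L$, and the statement to be proven is: every vertex $v$ of $L$ equals $c^{(j)}$ for some $j$. Equivalently, every vertex $v$ of $L$ arises as the distinguished point $\pi_{\Gamma_j}$ for at least one $j$, i.e. $v$ lies in $\Pi_3$ after translation by $\mathcal{A}\cdot P_j$.

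The heart of the argument is to show that a vertex of the stable intersection cannot be produced "transversally" from edge-interior points of all the $\Gamma_j$. I would argue as follows: near the vertex $v$, the line $L$ has (at least) three local branches. Each hypersurface $\mathcal{T}(\mathcal{H}_j)$ is $(n-2)$-dimensional in $\TP^{n-1}$; its codimension is $1$. For the stable intersection of the $n-2$ hypersurfaces to be $0$-dimensional at $v$ with $v$ a genuine vertex (not a smooth point of $L$), the local fan of $L$ at $v$ must be non-trivial, and since $L$ is the \emph{stable} intersection, $v$ is the stable intersection point of the local fans $\mathrm{star}_v(\mathcal{T}(\mathcal{H}_j))$. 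If for every $j$ the point $v$ were an edge-interior point of $\Gamma_j = L + \mathcal{A}\cdot P_j$ — equivalently, $\mathcal{A}\cdot P_j$ picks out exactly two of the indices at $v$, so that $\mathrm{star}_v(\mathcal{T}(\mathcal{H}_j))$ is (locally) just a classical hyperplane through $v$ — then the stable intersection of $n-2$ such hyperplanes is a single point with trivial local fan, contradicting that $v$ is a vertex of $L$. Thus at least one $\mathcal{H}_j$ must contribute a genuine tropical branch point at $v$, meaning the minimum of $\{v_i + a_i\cdot P_j\}_i$ is attained at least $3$ times. I would make this precise by computing, at $v$, the set of active indices $S = \{i : v_i + a_i\cdot P_j = \mathcal{H}_j(v)\}$ for each $j$, and showing $\bigcup_j$ of the associated balanced conditions must be non-trivial in the direction transverse to $L$; the balancing/dimension bookkeeping is the routine part, but the one subtlety is ensuring the stable intersection really does demand a non-linear contribution at every vertex rather than allowing cancellation among the linear pieces.

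The main obstacle I anticipate is making the "stable intersection forces a branch point" step rigorous without invoking heavy machinery: one needs to track the local structure of the stable intersection of several tropical hypersurfaces at a point and relate a non-trivial local fan of the result to a non-trivial (non-classical-hyperplane) local fan of at least one factor. A clean way around this is to argue by contradiction on the level of the $\Gamma_j$: if $v$ were edge-interior in every $\Gamma_j$, then locally $L = \bigcap_j \mathcal{T}(\mathcal{H}_j)$ near $v$ coincides with the transverse intersection of $n-2$ affine hyperplanes, whose local structure is that of a $1$-dimensional linear space — in particular $v$ is not a vertex — and this transversality survives to the stable intersection, contradiction. Conversely, if some $\Gamma_j$ has $v$ at a vertex of valence $m \geq 3$, then by Lemma~\ref{lemma skeleton}(a), $v \in \Pi_{\lceil 2m/(m-1)\rceil} = \Pi_3$ after translation, giving the desired multiplicity-$3$ statement.
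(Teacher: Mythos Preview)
Your final contradiction argument is exactly the paper's proof: assume the minimum of $\{v_i + a_i\cdot P_j\}_i$ is attained precisely twice for every $j$, observe that then each $\mathcal{T}(\mathcal{H}_j)$ is locally at $v$ a classical affine hyperplane, and conclude that the stable intersection of $n-2$ such hyperplanes is locally a linear subspace, so $v$ cannot be a vertex of $L$. The paper states this in three sentences without any of the preceding scaffolding.

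The detour through Theorem~\ref{thm P fixed}, the translates $\Gamma_j$, the distinguished points $\pi_{\Gamma_j}$, and the points $c^{(j)}$ is unnecessary and contains a terminological slip you should clean up: $\Gamma_j = L + \mathcal{A}\cdot P_j$ is a \emph{translate} of $L$, so a vertex $v$ of $L$ always maps to a vertex of $\Gamma_j$, never to an edge-interior point. What you actually mean by ``$v$ edge-interior in $\Gamma_j$'' is simply ``the minimum of $\{v_i + a_i\cdot P_j\}_i$ is attained exactly twice'', which is a statement about how $\Gamma_j$ sits inside $\Pi_2$, not about the combinatorics of $\Gamma_j$ as a tree. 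Likewise, the reformulation ``every vertex $v$ equals $c^{(j)}$ for some $j$'' is stronger than what the proposition asserts and is not what your final argument actually proves. None of this damages the proof, since your last paragraph stands on its own; but stripping away the first two steps would make the write-up both shorter and correct in every detail.
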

\begin{proof}
We know that $L$ is the stable intersection of the tropical hyperplanes $$\mathcal{T}((a_1\cdot P_i)\otimes x_1 \oplus \ldots \oplus (a_n\cdot P_i)\otimes x_n)\subset\TP^{n-1}.$$ Let $v$ be a vertex of $L$. Assume that the minimum of $\{v_i+a_i\cdot P_j\}_{i=1,\ldots,n}$ is attained precisely two times for each $P_j$ and let $\alpha(j),\beta(j)\in\{1,\ldots,n\}$ be the indices of the terms corresponding to the minimum. Then $L$ is locally defined by the stable intersection of the hyperplanes given by $a_{\alpha(j)}\cdot P_j+x_{\alpha(j)}=a_{\beta(j)}\cdot P_j+x_{\beta(j)}$ in a neighborhood of $v$. Since such an intersection is a linear subspace, we get a contradiction.
\end{proof}

So if $L$ is the stable linear pencil of tropical plane curves through some configuration of points $P_i$, the above result gives some evidence that each $P_i$ is situated in Case (2) of Theorem \ref{thm P fixed}. In fact, we will prove this in the next section.

\section{Compatible linear pencils} \label{section compatible}

Let $L\subset\TP^{n-1}$ be a tropical line that is compatible with $\mathcal{A}$. A vertex split of $L$ at a trivalent vertex $v\in L$ gives rise to partition $\mathcal{A}_1,\mathcal{A}_2,\mathcal{A}_3$ of $\mathcal{A}$. The compatibility condition implies that for each quartet $a,b,c,d\in\mathcal{A}$ with $a,b\in\mathcal{A}_i$ and $c,d\not\in\mathcal{A}_i$ (with $i\in\{1,2,3\}$) holds that the convex hull of the four points $a,b,c,d$ has at least one of the segments $\conv(a,b)$ or $\conv(c,d)$ as an edge. Denote this property on quartets by $(\maltese)$.

\begin{lemma} \label{lemma comb}
Let $\mathcal{A}_1,\mathcal{A}_2,\mathcal{A}_3\subset \mathcal{A}$ be a partition that satisfies $(\maltese)$.
Then for each maximal triangulation $\Delta=\{T_1,\ldots,T_r\}$ of $\conv(\mathcal{A})$ (with corners of the triangles in $\mathcal{A}$), there exists a triangle $T\in\Delta$ having one vertex in each of the sets $\mathcal{A}_1,\mathcal{A}_2,\mathcal{A}_3$.
\end{lemma}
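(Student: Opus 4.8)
The plan is to show this by a combinatorial/topological argument using the structure of the triangulation together with property $(\maltese)$. First I would observe that since $\mathcal{A}_1,\mathcal{A}_2,\mathcal{A}_3$ partition $\mathcal{A}$ and each $\mathcal{A}_i$ is nonempty (it contains at least one leaf of the corresponding component after the vertex split), the $1$-skeleton of $\Delta$ — viewed as a planar graph drawn in $\conv(\mathcal{A})$ with every vertex colored by the block it belongs to — has all three colors present. The goal is to produce a triangle (a $2$-face of $\Delta$) with all three colors on its vertices; such a triangle is exactly a ``rainbow'' triangle of the colored triangulation.

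The key step is to argue by contradiction: suppose no triangle of $\Delta$ is rainbow, i.e. every triangle $T\in\Delta$ uses at most two of the three colors. I would then look at the edges of $\Delta$ joining a vertex of one color class to a vertex of another. Property $(\maltese)$ enters here: it restricts which ``bichromatic'' quartets can occur, and in particular it should force that for any two blocks $\mathcal{A}_i,\mathcal{A}_j$, the set of edges of $\Delta$ between $\mathcal{A}_i$ and $\mathcal{A}_j$ forms a path (no branching), because a vertex $a\in\mathcal{A}_i$ adjacent to two vertices $c,d\in\mathcal{A}_j$ would give a quartet $a,a',c,d$ (with $a'$ any other vertex of $\mathcal{A}_i$ — recall each block has size $\geq 1$, and if some block is a singleton one argues slightly differently) violating $(\maltese)$ unless $\conv(c,d)$ is an edge of $\Delta$; iterating this convexity constraint along the boundary of the union of $\mathcal{A}_i$-colored triangles yields that the bichromatic region between any two blocks is ``thin''. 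I would formalize this by considering the subcomplex $\Delta_i$ consisting of all triangles with at least one vertex in $\mathcal{A}_i$, whose union is a closed polygonal region $R_i\subseteq\conv(\mathcal{A})$; under the no-rainbow assumption, $\partial R_i$ is a cycle of edges each having at most one color among $\{1,2,3\}\setminus\{i\}$ — and the blocks $j\neq i$ partition $\partial R_i$ into arcs. An Euler-characteristic or planar-duality count on how the three regions $R_1,R_2,R_3$ meet (they cover $\conv(\mathcal{A})$, each pair overlaps only in bichromatic triangles, and the triple overlap is empty by assumption) should yield a contradiction with the fact that $\conv(\mathcal{A})$ is a disk (simply connected), since three pairwise-intersecting simply connected closed regions covering a disk with empty triple intersection is topologically impossible.

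The cleanest route may actually be purely topological once $(\maltese)$ is used to control the geometry: I would define a continuous map $\conv(\mathcal{A})\to\Delta^2$ (the standard $2$-simplex) sending each vertex $a\in\mathcal{A}_i$ to the $i$-th vertex $e_i$ of $\Delta^2$ and extending affinely over each triangle of $\Delta$; the no-rainbow hypothesis says the image of this map avoids the barycenter of $\Delta^2$ (indeed avoids the interior of one face of $\Delta^2$ opposite to whichever color is missing — more carefully, each triangle maps into an edge of $\Delta^2$, so the whole image lies in $\partial\Delta^2$). Then the restriction to $\partial\conv(\mathcal{A})$ would give a map $S^1\to\partial\Delta^2\cong S^1$ that, by property $(\maltese)$ applied to the boundary vertices, has nonzero winding number (the boundary vertices of $\conv(\mathcal{A})$ cycle through the colors in a way that $(\maltese)$ forces to be ``monotone enough'' to wind around), contradicting that this map extends over the disk $\conv(\mathcal{A})$.

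The main obstacle I anticipate is making the use of $(\maltese)$ precise in the singleton cases and in controlling the boundary coloring: $(\maltese)$ is a statement about quartets and edges of $\conv(\{a,b,c,d\})$, and translating it into a clean statement about the triangulation $\Delta$ (``between any two color classes the bichromatic edges of $\Delta$ form a connected path lying near a line'') requires care, especially because $\Delta$ being a \emph{maximal} triangulation means all of $\mathcal{A}$ is used as vertices and one must rule out interior bichromatic configurations. I would handle this by a local analysis at each vertex of $\Delta$: around a vertex $a\in\mathcal{A}_i$, the link is a path/cycle of vertices, and $(\maltese)$ constrains the color pattern on that link (two vertices of a block $j\neq i$ appearing non-consecutively in the link of $a$ would, together with $a$ and a vertex in the third block or a second vertex of block $i$, violate $(\maltese)$), which gives exactly the local structure needed to run the winding-number argument. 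Given the framing of the paper, I expect the authors' proof to be a short combinatorial induction on $|\mathcal{A}|$ (removing an ear of $\Delta$ and checking the partition restricts compatibly), which might be cleaner than the topological argument above; but the topological picture is what I would reach for first.
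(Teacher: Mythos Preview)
Your Sperner/winding-number idea is natural, but it has a genuine gap. The map $\conv(\mathcal{A})\to\partial\Delta^2$ produces a contradiction only if its restriction to $\partial\conv(\mathcal{A})$ has nonzero degree, and this need not be the case: property $(\maltese)$ does \emph{not} force all three colors to appear on the boundary of $\conv(\mathcal{A})$. Concretely, it is perfectly consistent with $(\maltese)$ that $\conv(\mathcal{A}_1)\cup\conv(\mathcal{A}_2)\subset\conv(\mathcal{A}_3)$, so every boundary vertex of $\conv(\mathcal{A})$ lies in $\mathcal{A}_3$; then your boundary map is constant at $e_3$, the winding number is zero, and nothing prevents a null-homotopic extension over the disk. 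Your nerve/Euler-characteristic sketch has the same blind spot (the pairwise intersections $R_i\cap R_j$ need not be connected, let alone contractible), and your local link analysis does not by itself assemble into a global obstruction in this interior-color case. The ``singleton'' issue you flag is a red herring; the real obstacle is interior color classes.

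The paper's proof is not the induction you anticipate but a geometric case analysis, and its first move is the step you are missing: from $(\maltese)$ one shows directly that for each pair $i,j$ the hulls $\conv(\mathcal{A}_i)$ and $\conv(\mathcal{A}_j)$ are either disjoint or nested (otherwise an edge of one hull crosses a segment with endpoints in the other, giving a bad quartet). This leaves, up to relabeling, only two configurations: the three hulls pairwise disjoint, or two of them contained in the third (the fully nested case is excluded by another quartet argument). In the pairwise-disjoint case the authors separate two of the blocks by a line and analyse the strip of bichromatic triangles crossing it; in the ``two-inside-one'' case they use a cone argument from a vertex of $\mathcal{A}_1$ to trap a point of $\mathcal{A}_2$ and produce a bad quartet. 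Your winding-number argument can in fact be completed in the pairwise-disjoint configuration (there each color occupies a single boundary arc, so the degree is $\pm1$), but you still need the structural dichotomy first, and you need a separate argument for the interior configuration---which is exactly where the paper does the real work.
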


\begin{proof}
First we claim that for each pair $i,j\in\{1,2,3\}$ we have that $\conv(\mathcal{A}_i)\subset\conv(\mathcal{A}_j)$, $\conv(\mathcal{A}_j)\subset\conv(\mathcal{A}_i)$ or $\conv(\mathcal{A}_i)\cap\conv(\mathcal{A}_j)=\emptyset$. Indeed, assume that $\conv(\mathcal{A}_i)\setminus\conv(\mathcal{A}_j)$, $\conv(\mathcal{A}_j)\setminus\conv(\mathcal{A}_i)$ and $\conv(\mathcal{A}_i)\cap\conv(\mathcal{A}_j)$ are nonempty, then there exist elements $a\in \mathcal{A}_i\cap\conv(\mathcal{A}_j)$ and $b\in\mathcal{A}_i\setminus \conv(\mathcal{A}_j)$. The line segment $\conv(a,b)$ cuts the border of $\conv(\mathcal{A}_j)$ at a line segment $\conv(c,d)$ with $c,d\in\mathcal{A}_j$. The convex hull of the quartet $a,b,c,d\in\mathcal{A}$ has $\conv(a,b)$ or $\conv(c,d)$ as diagonals, a contradiction.
Using the above claim, there are three possible configurations of $\mathcal{A}_1,\mathcal{A}_2,\mathcal{A}_3$ (after a renumbering if necessary): $\conv(\mathcal{A}_1),\conv(\mathcal{A}_2),\conv(\mathcal{A}_3)$ pairwise disjoint, $(\conv(\mathcal{A}_1)\cup\conv(\mathcal{A}_2))\subset \conv(\mathcal{A}_3)$ or
$\conv(\mathcal{A}_1)\subset\conv(\mathcal{A}_2)\subset\conv(\mathcal{A}_3)$. The latter configuration is impossible. Indeed, take $c\in\mathcal{A}_1$ and $d\in\mathcal{A}_3\setminus \conv(\mathcal{A}_2)$. Then the line segment $\conv(c,d)$ passes through a edge $\conv(a,b)$ of the border of $\conv(\mathcal{A}_2)$ and the quartet $a,b,c,d$ is in contradiction with $(\maltese)$.

Assume we are in the case of the first configuration, so the the sets $\conv(\mathcal{A}_i)$ are pairwise disjoint. Since $\Delta$ is a triangulation of $\conv(\mathcal{A})$ (and thus not only of the sets $\conv(\mathcal{A}_i)$), we may assume (after a renumbering if necessary) there is an edge of a triangle $T\in\Delta$ connecting a point in $\mathcal{A}_1$ with point in $\mathcal{A}_2$. Let $\Delta'\subset\Delta$ be the set of all triangles having two vertices in  $\mathcal{A}_1$ and one vertex in $\mathcal{A}_2$ or vice versa. If $\Delta'=\emptyset$, then the third vertex of the triangle $T$ is contained in $\mathcal{A}_3$ and the statement follows. Now assume $\Delta'$ is nonempty, say $\Delta'=\{T_1,\ldots,T_s\}$ (after a renumbering if necessary). We can draw a line $\Lambda$ subdividing the plane into two half-planes $H_1$ and $H_2$ such that $\conv(\mathcal{A}_1)\subset H_1$, $\conv(\mathcal{A}_2)\subset H_2$ and $\Lambda\cap\mathcal{A}=\emptyset$. Using an identification of the line $\Lambda$ with $\mathbb{R}$ (using an affine coordinate), each of the triangles $T_i\in \Delta'$ cuts $\Lambda$ in a closed interval $I_i$ of $\mathbb{R}$ and two intervals $I_i$ and $I_j$ (with $i,j\in\{1,\ldots,s\}$ different) are either pairwise disjoint or subsequent (i.e. the intersection is a point). We may assume that $T_1,\ldots,T_s$ are numbered in such a way that $\max(I_i)\leq \min(I_{i+1})$ for all $i\in\{1,\ldots,s-1\}$. If one of the above inequalities is strict, then the statement follows, since the edge of $T_i$ containing $\max(I_i)$ has to be the edge of another triangle $T\in\Delta$ having its third vertex in $\mathcal{A}_3$. Hence the intervals $I_1,\ldots,I_s$ are subsequent. Denote by $a_{1,i}\in\mathcal{A}_1$ and $a_{2,i}\in\mathcal{A}_2$ the two vertices of the edge of $T_i$ containing $\min(I_i)\in \Lambda$ and by $a_{1,s+1}\in\mathcal{A}_1$ and $a_{2,s+1}\in\mathcal{A}_2$ the two vertices of the edge of $T_s$ containing $\max(I_s)$. Note that either $a_{1,i}=a_{1,i+1}$ and $T_i=\conv(a_{1,i},a_{2,i},a_{2,i+1})$ or $a_{2,i}=a_{2,i+1}$ and $T_i=\conv(a_{1,i},a_{1,i+1},a_{2,i})$. If $\conv(a_{1,1},a_{2,1})$ (resp. $\conv(a_{1,s+1},a_{2,s+1})$) is not a part of the border of $\conv(\mathcal{A})$, then this line segment is the edge of a triangle $T\in\Delta$ different from $T_1$ (resp. $T_s$) with one vertex in each of the three sets $\mathcal{A}_i$. So we may assume that $\conv(a_{1,1},a_{2,1})$ and $\conv(a_{1,s+1},a_{2,s+1})$ are parts of the border of $\conv(\mathcal{A})$.
\begin{figure}[ht]
\centering
\includegraphics[height=3.5cm]{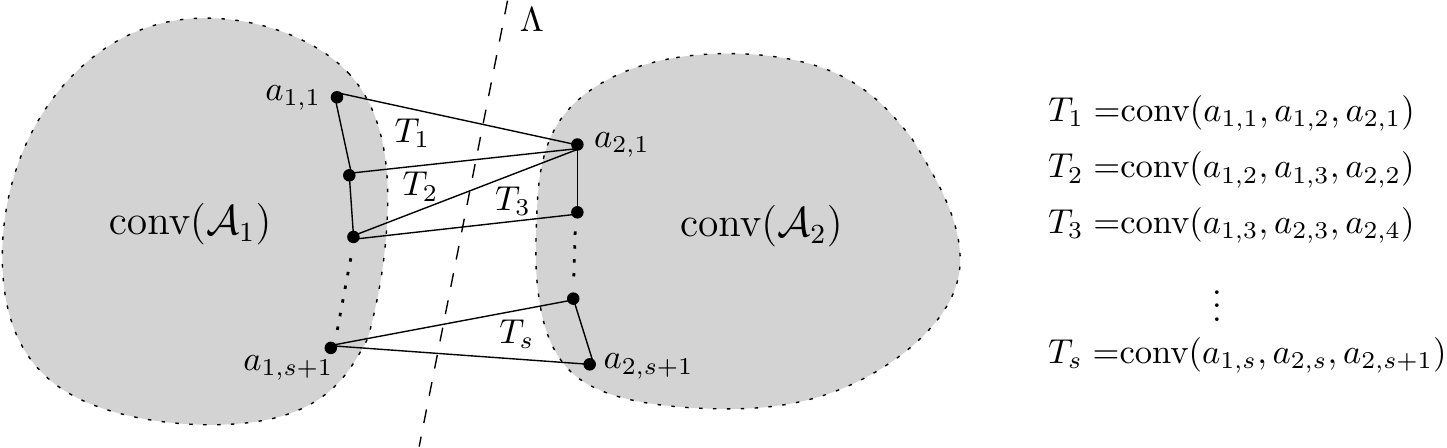}
\caption{first configuration}
\end{figure}

First note that $\conv(a_{1,1},a_{1,s+1},a_{2,1},a_{2,s+1})$ does not contain a point of $\mathcal{A}_3$, since
\begin{IEEEeqnarray}{l} \conv(a_{1,1},a_{1,s+1},a_{2,1},a_{2,s+1}) \quad \subset \quad \conv(a_{1,1},\ldots,a_{1,s+1},a_{2,i},\ldots,a_{2,s+1}) \nonumber \\
\qquad \subset \quad \conv(a_{1,1},\ldots,a_{1,s+1}) \cup \conv(a_{2,1},\ldots,a_{2,s+1}) \cup \bigcup_{i=1}^s T_i \nonumber \\
\qquad \subset \quad \conv(\mathcal{A}_1)\cup\conv(\mathcal{A}_2)\cup \bigcup_{i=1}^s T_i. \nonumber
\end{IEEEeqnarray}
Let $a_3\in \mathcal{A}_3$. We may assume that $a_3$ is contained in the half-plane $H_1$. If $a_{1,1}=a_{1,s+1}$, then $a_3$ is contained in $\conv(a_{1,1},a_{2,1},a_{2,s+1})$ since $H_1\cap\conv(\mathcal{A})=H_1\cap\conv(a_{1,1},a_{2,1},a_{2,s+1})$, a contradiction. So we have that $a_{1,1}\neq a_{1,s+1}$ and the quartet $a_{1,1},a_{1,s+1},a_{2,1},a_3$ is in contradiction with $(\maltese)$.

To finish the proof, we have to take care of the second configuration, so $(\conv(\mathcal{A}_1)\cup\conv(\mathcal{A}_2))\subset \conv(\mathcal{A}_3)$. Assume there is no triangle $T\in\Delta$ having one vertex in $\mathcal{A}_1$ and one in $\mathcal{A}_2$. For $(i,j)\in\{(3,0),(2,1),(1,2)\}$, denote by $\Delta_{i,j}\subset\Delta$ the set of all triangles $T$ having $i$ vertices in $\mathcal{A}_1$ and $j$ vertices in $\mathcal{A}_3$. For a triangle $T=\conv(a_1,a_3,a'_3)\in\Delta_{1,2}$ with $a_1\in\mathcal{A}_1$ and $a_3,a'_3\in\mathcal{A}_3$, let $C(T)\subset\mathbb{R}^2$ be the cone with top $a_1$ over the line segment $[a_3,a'_3]$. Then we can see that $$\left(\mathbb{R}^2\setminus \bigcup_{T\in\Delta_{1,2}} C(T)\right) \subset  \left(\bigcup_{T\in\Delta_{3,0}} T \cup \bigcup_{T\in\Delta_{2,1}} T \right) \subset \left(\conv{\mathcal{A}_1} \cup \bigcup_{T\in\Delta_{2,1}} T \right).$$
\begin{figure}[ht]
\centering
\includegraphics[height=3cm]{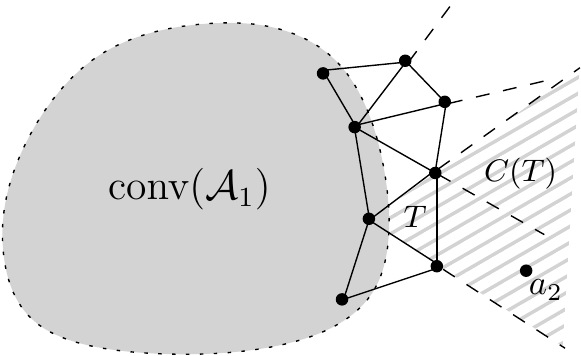}
\caption{second configuration}
\end{figure}

If $a_2\in\mathcal{A}_2$, then $a_2\not\in \conv(\mathcal{A}_1) \cup \bigcup_{T\in\Delta_{2,1}} T$ and thus $a_2\in C(T)$ for some triangle $T=\conv(a_1,a_3,a'_3)\in\Delta_{1,2}$. Hence the quartet of points $a_1,a_2,a_3,a'_3$ is in contradiction with $(\maltese)$. So there exists a triangle $T\in\Delta$ connecting $\mathcal{A}_1$ with $\mathcal{A}_2$ and by using analogous arguments as in the case of the first configuration, we can prove the existence of a triangle $T\in\Delta$ having one vertex in each of the sets $\mathcal{A}_1,\mathcal{A}_2,\mathcal{A}_3$.
\end{proof}

\begin{proposition} \label{prop P_v}
Let $L\subset \TP^{n-1}$ be a linear pencil of tropical plane curves with support $\mathcal{A}$ such that $L$ is compatible with $\mathcal{A}$. Let $v$ be a trivalent vertex of $L$ such that the regular subdivision $\Delta(v)$ of $\conv(\mathcal{A})$ is maximal. Then $v$ corresponds to a point $P_v$ in the fixed locus of $L$.
\end{proposition}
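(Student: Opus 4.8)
The plan is to produce, for the given trivalent vertex $v$, a point $P_v\in\TP^2$ that realizes case (2) of Theorem \ref{thm P fixed} with $c=v$. Concretely, I will look for $P_v$ such that the minimum of $\{v_\ell+a_\ell\cdot P_v\}_{\ell=1,\ldots,n}$ is attained by exactly three terms whose indices label leaves lying in three distinct components of $L\setminus\{v\}$; by Theorem \ref{thm P fixed}, this is precisely what it means for $P_v$ to belong to the fixed locus of $L$.

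First I would perform the vertex split of $L$ at $v$: since $v$ is trivalent, $L\setminus\{v\}$ has three components, and each of them -- being a subtree containing at least one leaf -- contributes a nonempty block to a partition $\mathcal{A}=\mathcal{A}_1\sqcup\mathcal{A}_2\sqcup\mathcal{A}_3$, where $a_\ell\in\mathcal{A}_m$ if and only if the leaf $\ell$ lies in the $m$-th component. Because $L$ is compatible with $\mathcal{A}$, this partition satisfies property $(\maltese)$. Next I would invoke the maximality hypothesis on $\Delta(v)$: a maximal regular subdivision of $\conv(\mathcal{A})$ is a triangulation $\Delta(v)=\{T_1,\ldots,T_r\}$ whose triangles have their corners in $\mathcal{A}$, so Lemma \ref{lemma comb} applies to the partition $\mathcal{A}_1,\mathcal{A}_2,\mathcal{A}_3$ and yields a triangle $T=\conv\{a_i,a_j,a_k\}\in\Delta(v)$ with $a_i\in\mathcal{A}_1$, $a_j\in\mathcal{A}_2$, $a_k\in\mathcal{A}_3$.

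Now I would translate back via the duality between $\Delta(v)$ and $\mathcal{T}(F_v)$ recalled in the introduction: since $T$ is a face of $\Delta(v)$, there is a point $P_v\in\TP^2$ with
$$v_i+a_i\cdot P_v=v_j+a_j\cdot P_v=v_k+a_k\cdot P_v<v_\ell+a_\ell\cdot P_v$$
for every $\ell\in\{1,\ldots,n\}\setminus\{i,j,k\}$; that is, the minimum of $\{v_\ell+a_\ell\cdot P_v\}_\ell$ is attained exactly by the terms $i,j,k$. As $v$ is a vertex of $L$ and, by construction, the leaves $i,j,k$ lie in the three distinct components of $L\setminus\{v\}$, case (2) of Theorem \ref{thm P fixed} (with $c=v$) shows that $P_v$ is a point of the fixed locus of $L$.

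The genuine content here is the combinatorial Lemma \ref{lemma comb}, which has already been established; within this proposition the only things requiring care are checking that each component of $L\setminus\{v\}$ does contribute a leaf -- so the three blocks are nonempty and Lemma \ref{lemma comb} is applicable -- and reading ``maximal subdivision'' as ``triangulation with all vertices among the $a_\ell$'', so that the triangle $T$ furnished by the lemma is genuinely a face of $\Delta(v)$ and hence dual to a vertex-type point of $\mathcal{T}(F_v)$. I do not anticipate a serious obstacle beyond this bookkeeping.
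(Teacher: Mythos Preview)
Your proposal is correct and follows essentially the same route as the paper: invoke Lemma~\ref{lemma comb} on the partition $\mathcal{A}_1,\mathcal{A}_2,\mathcal{A}_3$ coming from the vertex split at $v$ to find a triangle of $\Delta(v)$ with a corner in each block, take $P_v$ to be its dual vertex in $\mathcal{T}(F_v)$, and conclude via case~(2) of Theorem~\ref{thm P fixed}. The paper's proof is just a slightly terser version of what you wrote.
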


\begin{proof}
By Lemma \ref{lemma comb}, we find a triangle $T_v=\conv(a_i,a_j,a_k)$ of the maximal subdivision $\Delta(v)$ of $\mathcal{A}$ such that $a_i\in \mathcal{A}_1$, $a_j\in\mathcal{A}_2$ and $a_k\in\mathcal{A}_3$. Let $P_v=(x,y,z)\in\mathcal{T}(F_v)\subset\TP^2$ be the point which is dual to the triangle $T_v$, hence $$v_i+r_ix+s_iy+t_iz=v_j+r_jx+s_jy+t_jz=v_k+r_kx+s_ky+t_kz<v_{\ell}+r_{\ell}x+s_{\ell}y+t_{\ell}z$$ for all $\ell\in\{1,\ldots,n\}\setminus\{i,j,k\}$. From Theorem \ref{thm P fixed}, it follows that $P_v$ is contained in the fixed locus of $L$.
\end{proof}

Let $L\subset \TP^{n-1}$ be a linear pencil of tropical plane curves with support $\mathcal{A}$. Assume that $L$ is compatible with $\mathcal{A}$, trivalent and that each (trivalent) vertex of $L$ gives rise to a maximal subdivision of $\conv(\mathcal{A})$. Note that Proposition \ref{prop P_v} implies that the fixed locus of $L$ contains a configuration of $n-2$ points and in order to prove Theorem \ref{thm main}, we need to show that this configuration is general.

We are going to assign a subgraph $G_v$ of $K_{n-2,n}$ to each vertex $v$ of $L$. We identify the vertices of $K_{n-2,n}$ by the vertices of $L$ (of which there are $n-2$) and the elements of $\mathcal{A}$ (of which there are $n$). The edge $(w,a_{\ell})$ of $K_{n-2,n}$ is and edge of $G_v$ if and only if $a_{\ell}$ is a corner of $T_w$ and the path between the leaf $\ell$ and $v$ passes through $w$. So $(w,a_{\ell})\in E(G_v)$ if and only if $v_{\ell}+a_{\ell}\cdot P_w=\min_{k=1,\ldots,n}\,\{v_k+a_k\cdot P_w\}$. Note that precisely two of the three corners of $T_w$ satisfy this condition for $w\neq v$, and all three for $w=v$. If $a_{\ell}\in\mathcal{A}$, the vertex $w$ of $L$ that is closest to the leaf $\ell$ gives rise to an edge $(w,a_{\ell})\in E(G_v)$. We conclude that $|E(G_v)|=2(n-3)+3=2n-3$ and $|V(G_v)|=n+(n-2)=2n-2$.

\begin{lemma}
The graph $G_v$ is connected and has no cycles.
\end{lemma}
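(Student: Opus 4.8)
The plan is to read off the statement from the two equalities $|V(G_v)|=2n-2$ and $|E(G_v)|=2n-3$ established just above. If $G_v$ is acyclic then, having $|V(G_v)|-1$ edges, it is a forest with exactly one connected component, i.e. a tree, and in particular connected; so it is enough to prove that $G_v$ contains no cycle.

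To rule out cycles I would root $L$ at $v$ and exploit the following, immediate from the definition of $G_v$: if $(w,a_\ell)\in E(G_v)$, then the path in $L$ from the leaf $\ell$ to $v$ passes through $w$, i.e. $w$ is an ancestor of $\ell$. Suppose $w_1,a_{\ell_1},w_2,a_{\ell_2},\ldots,w_k,a_{\ell_k}$ were a cycle in $G_v$, so that $a_{\ell_i}$ is a common corner of $T_{w_i}$ and $T_{w_{i+1}}$ for every $i$ (indices modulo $k$). Then $w_i$ and $w_{i+1}$ are both ancestors of the leaf $\ell_i$, hence comparable in the rooted tree, and the path in $L$ joining them is monotone in the distance to $v$. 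Pick a vertex $w_{i_0}$ among $w_1,\ldots,w_k$ at minimal distance $d$ from $v$; being comparable to its two distinct cyclic neighbours, $w_{i_0}$ is strictly closer to $v$ than each of them, so it is an ancestor of both $w_{i_0-1}$ and $w_{i_0+1}$.

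Next I would split into two cases. If $w_{i_0-1}$ and $w_{i_0+1}$ lie in the same component $C$ of $L\setminus\{w_{i_0}\}$, then the leaf $\ell_{i_0-1}$ lies below $w_{i_0-1}$ and the leaf $\ell_{i_0}$ lies below $w_{i_0+1}$, so both lie in $C$; but $a_{\ell_{i_0-1}}$ and $a_{\ell_{i_0}}$ are two distinct corners of $T_{w_{i_0}}$, which contradicts the fact that, by construction, $T_{w_{i_0}}$ has exactly one corner in each of the three parts of the vertex split at $w_{i_0}$ — that is, one corner whose underlying leaf lies in each component of $L\setminus\{w_{i_0}\}$ (Lemma \ref{lemma comb}). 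If instead $w_{i_0-1}$ and $w_{i_0+1}$ lie in different components, I would trace the closed walk in $L$ obtained by concatenating the monotone legs $[w_i,w_{i+1}]$. Any interior point of such a leg is strictly farther from $v$ than one of the leg's endpoints, and hence than $w_{i_0}$; so the walk stays at distance $\ge d$ from $v$ and reaches distance exactly $d$ only at those $w_j$ lying at distance $d$. Right after visiting $w_{i_0}$ the walk descends into the component $C$ of $L\setminus\{w_{i_0}\}$ containing $w_{i_0+1}$, whereas $w_{i_0-1}$ lies in a different component; so before reaching $w_{i_0-1}$ the walk must leave $C$, which is possible only by passing through the cut vertex $w_{i_0}$ a second time — impossible, since $w_{i_0}$ occurs only once among the $w_j$ and cannot be an interior point of any leg. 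Either case yields a contradiction, so $G_v$ is acyclic, hence a tree.

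I expect the second case to be the main obstacle: there the local combinatorics of the single triangle $T_{w_{i_0}}$ is no longer enough and one must argue globally with the cyclic walk, the decisive feature being that the walk never rises above its shallowest marked vertex and meets that level only at marked vertices, so the subtree it is forced to enter becomes a trap it cannot escape. The remainder is routine and essentially already in place: the edge count $|E(G_v)|=2n-3$, the implication "acyclic together with $|E|=|V|-1$ gives connected", and the description of the $G_v$-neighbours of a vertex $w$ of $L$ as the corners of $T_w$ lying strictly below $w$ (two of them when $w\ne v$, all three when $w=v$).
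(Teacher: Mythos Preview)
Your argument is correct, but it runs in the opposite direction from the paper's. The paper first proves \emph{connectivity} by an induction on the number of vertices in the subtree $\Lambda_w$ (the union of $\{w\}$ with the components of $L\setminus\{w\}$ not containing $v$): given $w$, one picks the corner $a_\ell$ of $T_w$ whose leaf lies in the neighbouring subtree $\Lambda_{w''}$, and the two edges $(w,a_\ell)$ and $(w''',a_\ell)$ link $w$ to a vertex $w'''$ inside $\Lambda_{w''}$, after which the induction hypothesis connects everything in $\Lambda_{w''}$ to $w''$. Once connectivity is in hand, the paper reads off acyclicity from the Euler characteristic $g(G_v)=|E(G_v)|-|V(G_v)|+1=0$. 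You instead prove \emph{acyclicity} directly, via the rooted-tree/shallowest-vertex argument, and then deduce connectivity from $|E|=|V|-1$. Both routes rely on the same two facts established just before the lemma: the edge count $|E(G_v)|=2n-3$, and that the corners of $T_w$ lie one in each component of $L\setminus\{w\}$. The paper's route is a bit shorter because the connectivity induction needs only the existence of one suitable corner of $T_w$, whereas your acyclicity argument requires a case split and the global walk argument in Case~2; on the other hand, your approach makes the tree-structure obstruction to cycles explicit. One small remark: when the cycle has minimal length four (so $k=2$), your ``two distinct cyclic neighbours'' of $w_{i_0}$ coincide, but this puts you automatically in Case~1 and the argument goes through unchanged.
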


\begin{proof}
We claim that $G_v$ is connected. First note that it is sufficient to show that each vertex $w\in V(G_v)$ is connected with $v$. Let $\Lambda_w$ be the union of $\{w\}$ with the components of $L\setminus\{w\}$ not containing $v$. We will prove by induction on the number $\lambda_w$ of vertices in the subtree $\Lambda_w$ that each vertex $w'$ in $\lambda_w$ is connected with $w$ in $G_v$. Since $\Lambda_v=L$, this would imply the claim. If $\lambda_w=1$, there is nothing to prove. Now assume the statement is proven for any subtree $\Lambda_w$ with $\lambda_w<\lambda$ and take a subtree $\Lambda_w$ with $\lambda_w=\lambda$. Let $w'\neq w$ be a vertex in $\Lambda$. Consider the vertex $w''\neq w$ closest to $w$ on the path between $w$ and $w'$ (so $w'\in\Lambda_{w''}$) and take the corner $a_\ell$ of $T_w$ corresponding to a leaf $\ell$ lying in $\Lambda_{w''}$. Denote by $w'''$ the vertex of $\Lambda_{w''}\subset L$ which is closest to $\ell$. Then $(w''',a_{\ell}),(w,a_{\ell})\in E(G_v)$, hence $w$ and $w'''$ are connected in $G_v$. On the other hand, by the induction hypothesis on the subtree $\Lambda_{w''}$, both $w'$ and $w'''$ are connected with $w''$. We conclude that $w$ is connected with $w'$ in $G_v$.
\begin{figure}[ht]
\centering
\includegraphics[height=3cm]{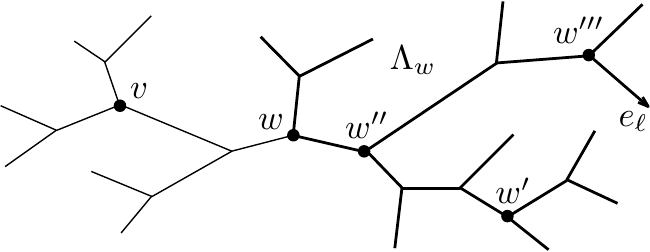}
\caption{the subtree $\Lambda_w$ of $L$}
\end{figure}

Since $G_v$ is connected, we can compute the genus (or first Betti number) of $G_v$ as $g(G_v)=|E(G_v)|-|V(G_v)|+1=0$. This implies that $G_v$ has no cycles.
\end{proof}

In the same way as we introduced $G_v\subset K_{n-2,n}$ for vertices $v$ of $L$, we can define $G_c\subset K_{n-2,n}$ for $2$-valent points $c$ of $L$, i.e. $(w,a_{\ell})\in E(G_c)$ if and only if $c_{\ell}+a_{\ell}\cdot P_w=\min_{k=1,\ldots,n}\,\{c_k+a_k\cdot P_w\}$. Note that if $c$ is contained in an edge adjacent to $v$, then $V(G_c)=V(G_v)$ and $E(G_c)=E(G_v)\setminus \{(v,a_{\ell})\}$ where $a_{\ell}$ is the corner of $T_v$ that corresponds to the leaf $\ell$ of $L$ lying on the same side of $v$ as $c$. This implies that $G_c$ has genus $0$ (and two connected components). If $\mathcal{B}\subset\mathcal{A}$, denote by $N(\mathcal{B})$ the {\it neighborhood} of $\mathcal{B}$ in $G_c$, i.e. the set of vertices $v$ of $L$ such that $(v,a_{\ell})\in E(G_c)$ for some $a_{\ell}\in\mathcal{B}$.

\begin{lemma} \label{lemma neighborhood}
Let $i,j\in\{1,\ldots,n\}$ be leaves of $L$ and $c$ be a $2$-valent point of $L$ on the path between $i$ and $j$. Then the inequality $|N(\mathcal{B})|\geq |\mathcal{B}|$ holds for each subset $\mathcal{B}\subset \mathcal{A}\setminus\{a_i,a_j\}$.
\end{lemma}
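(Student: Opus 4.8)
The plan is to verify the hypothesis of Hall's marriage theorem for the bipartite graph $G_c$, viewed as a set system on $\mathcal{A}\setminus\{a_i,a_j\}$ with ``parts'' being the neighborhoods among the vertices of $L$, and to extract the inequality from the tree structure of $G_c$ together with the position of $c$. Recall that $G_c$ has genus $0$ with exactly two connected components, say $G_c = G_c^{(1)} \sqcup G_c^{(2)}$, where $G_c^{(1)}$ contains the leaves on the side of $c$ towards $i$ and $G_c^{(2)}$ contains those on the side towards $j$; in particular $a_i$ lies in $G_c^{(1)}$ and $a_j$ lies in $G_c^{(2)}$ (more precisely, the edge $(w,a_i)$ for $w$ the vertex of $L$ closest to leaf $i$ lies in $G_c^{(1)}$, and similarly for $j$). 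Each component, being a subtree of a forest, is itself a tree, so within each component the number of edges is one less than the number of vertices.

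The first step is to split $\mathcal{B}$ as $\mathcal{B} = \mathcal{B}_1 \sqcup \mathcal{B}_2$ according to which component of $G_c$ the corresponding element of $\mathcal{A}$ belongs to, and correspondingly $N(\mathcal{B}) = N_1(\mathcal{B}_1) \sqcup N_2(\mathcal{B}_2)$ where $N_s$ denotes neighborhood taken inside $G_c^{(s)}$; so it suffices to show $|N_s(\mathcal{B}_s)| \geq |\mathcal{B}_s|$ for $s = 1,2$ separately. Fix $s$ and consider the subgraph $H$ of the tree $G_c^{(s)}$ spanned by $\mathcal{B}_s \cup N_s(\mathcal{B}_s)$ together with all edges of $G_c^{(s)}$ incident to a vertex of $\mathcal{B}_s$; since $a_i$ (resp.\ $a_j$) is not in $\mathcal{B}_s$, the key point is that $H$ is a proper subforest missing at least one vertex of $G_c^{(s)}$, and crucially $H$ cannot contain all of $G_c^{(s)}$'s edges. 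The second step is the counting: every vertex of $\mathcal{B}_s$ contributes at least one edge to $H$ (the edge $(w,a_\ell)$ where $w$ is the vertex of $L$ closest to leaf $\ell$, which is always present in $G_c$ for $a_\ell \in \mathcal{A}$, and which by construction lies in the same component as $a_\ell$; one must check this edge is not the ``missing'' one, which holds because $a_i, a_j \notin \mathcal{B}$), so $|E(H)| \geq |\mathcal{B}_s|$; on the other hand, $H$ is a forest on the vertex set $\mathcal{B}_s \cup N_s(\mathcal{B}_s)$, so $|E(H)| \leq |\mathcal{B}_s| + |N_s(\mathcal{B}_s)| - 1$. Combining gives $|\mathcal{B}_s| \leq |\mathcal{B}_s| + |N_s(\mathcal{B}_s)| - 1$, which is off by one; the fix is to observe that $H$ has at least two connected components whenever $\mathcal{B}_s$ is nonempty --- because the ``missing'' leaf $a_i$ (resp.\ $a_j$) of $G_c^{(s)}$ is isolated from $H$ and $G_c^{(s)}$ is a tree, cutting it out disconnects things --- or more directly, that the component of $G_c^{(s)}$ is a tree with a specified leaf removed, forcing $|E(H)| \leq |\mathcal{B}_s| + |N_s(\mathcal{B}_s)| - 2$ when $H \subsetneq G_c^{(s)}$ in the appropriate sense. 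Summing over $s$ then yields $|N(\mathcal{B})| \geq |\mathcal{B}|$.

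The main obstacle I anticipate is getting the ``off by one'' bookkeeping exactly right: naively a spanning forest on $k$ vertices can have up to $k-1$ edges, which is too weak by one, and the extra $-1$ must come from correctly identifying that each component $G_c^{(s)}$ of the forest $G_c$ has an edge (the one to $a_i$ or $a_j$) that is guaranteed absent from the span of $\mathcal{B}_s \cup N_s(\mathcal{B}_s)$, together with the fact that this absence genuinely reduces the edge count of the spanned subforest. This is where the hypotheses $a_i, a_j \notin \mathcal{B}$ and ``$c$ lies on the path between the leaves $i$ and $j$'' are used in an essential way: the path condition is exactly what guarantees that $a_i$ and $a_j$ sit in different components of $G_c$, so that each side loses a vertex. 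Once this is pinned down, the argument is a clean double count, and applying Hall's theorem afterwards (in the next lemma, presumably) produces the required system of distinct representatives realizing $L$ as $L_C$.
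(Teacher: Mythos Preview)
Your overall strategy --- split $\mathcal{B}$ across the two components of $G_c$ and run a tree-counting argument in each --- is different from the paper's direct induction, and it \emph{can} be made to work. But as written there is a genuine gap, and it is exactly the off-by-one you flag.

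Your proposed repairs do not hold. Removing a leaf from a tree never disconnects it, so ``cutting out $a_i$'' yields no extra component; and there is no reason the induced subgraph $H$ on $\mathcal{B}_s \cup N_s(\mathcal{B}_s)$ should have two components in general. With only $|E(H)| \ge |\mathcal{B}_s|$ (from $\deg a_\ell \ge 1$) and the forest bound $|E(H)| \le |V(H)| - 1$, you get nothing beyond $|N_s(\mathcal{B}_s)| \ge 1$.

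The clean fix uses a fact you did not exploit: every $L$-vertex has degree exactly $2$ in $G_c$ (in $G_v$ the vertex $v$ has degree $3$ and all others degree $2$; passing to $G_c$ removes one edge at $v$). Suppressing these degree-$2$ vertices turns each component $G_c^{(s)}$ into a tree $\tilde G^{(s)}$ on its $\mathcal{A}$-vertices, with the $L$-vertices becoming the edges; thus $|N_s(\mathcal{B}_s)|$ is the number of edges of $\tilde G^{(s)}$ incident to $\mathcal{B}_s$. Now count the complement: edges avoiding $\mathcal{B}_s$ lie in the forest induced on $\mathcal{B}_s^c$, hence there are at most $|\mathcal{B}_s^c| - 1$ of them provided $\mathcal{B}_s^c \neq \emptyset$. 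Subtracting from the total $|V(\tilde G^{(s)})| - 1$ gives $|N_s(\mathcal{B}_s)| \ge |\mathcal{B}_s|$ on the nose. This is where $a_i, a_j \notin \mathcal{B}$ enters: it makes each $\mathcal{B}_s^c$ nonempty --- but only once you know $a_i$ and $a_j$ lie in \emph{different} components of $G_c$, which you assert without proof. (It is true: the two components of $G_c$ correspond to the two sides of $L\setminus\{c\}$, and the path condition puts $i$ and $j$ on opposite sides; but this needs an argument of the same flavor as the connectedness proof for $G_v$.)

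For comparison, the paper bypasses all of this with a short induction on $|\mathcal{B}|$: it locates an $L$-vertex $\rho(v)$ whose only neighbor in $\mathcal{B}$ is a single $a_\ell$, so that $\rho(v)\in N(\mathcal{B})\setminus N(\mathcal{B}\setminus\{a_\ell\})$, and the step follows immediately. That argument never needs to identify the components of $G_c$ or use the degree-$2$ structure.
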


\begin{proof}
Let $C_i$ and $C_j$ be the components of $L\setminus\{c\}$ such that $i\in C_i$ and $j\in C_j$. We are going to prove this lemma by induction on $|\mathcal{B}|$. If $\mathcal{B}=\emptyset$, there is nothing to prove. Now assume that $\mathcal{B}\neq \emptyset$ and let $\mathcal{B}_i=\mathcal{B}\cap C_i$ and $\mathcal{B}_j=\mathcal{B}\cap C_j$. Since $\mathcal{B}=\mathcal{B}_i\cup\mathcal{B}_j$, we may assume that $\mathcal{B}_i\neq \emptyset$. Denote by $w$ the end point of the edge that contains $c$ and that is contained in $C_i$. For each vertex or leaf $v$ of $C_i$ different from $w$, write $\rho(v)\neq v$ to denote the vertex of $L$ on the path between $v$ and $c$ closest to $v$. Let $\mathcal{V}$ be the set of vertices or leaves $v$ of $C_i$ such that all leaves for which the path to $c$ passes through $v$ are contained in $\mathcal{B}_i$ and such that this is not the case for $\rho(v)$. We can take an element $v\in\mathcal{V}$ such that the component of $L\setminus\{\rho(v)\}$ not containing $c$ and $u$ does not have any leaf in $\mathcal{B}_i$. Note that the triangle $T_{\rho(v)}$ has precisely one corner $a_{\ell}\in\mathcal{B}_i$ and that $(\rho(v),a_{\ell})\in E(G_c)$, hence $\rho(v)\in N(\mathcal{B})$. On the other hand, we see that $\rho(v)\not\in N(\mathcal{B}\setminus\{a_{\ell}\})$, so the induction hypothesis implies that $$|N(\mathcal{B})|\geq |N(\mathcal{B}\setminus\{a_{\ell}\})|+1\geq |\mathcal{B}\setminus\{a_{\ell}\}|+1=|\mathcal{B}|.$$
\end{proof}

\begin{proof}[Proof of Theorem \ref{thm main}]
Since $L$ is a trivalent tree with $n$ leaves, it has precisely $n-2$ trivalent vertices which we will denote by $v_1,\ldots,v_{n-2}$. Using Proposition \ref{prop P_v}, each $v_i$ gives rise to a point $P_i$ in the fixed locus of $L$. It suffices to show that the configuration $C=\{P_1,\ldots,P_{n-2}\}$ is general with respect to $\mathcal{A}$. Therefore, we need to show that each maximal minor $M^{(i,j)}$ of the matrix $M$ with $M_{k,\ell}=a_{\ell}\cdot P_k$ is tropically non-singular.

Let $c$ be a $2$-valent point of $L$ on the path between $i$ and $j$ and consider the graph $G_c\subset K_{n-2,n}$. Hall's Marriage Theorem (see \cite{Hall}) and Lemma \ref{lemma neighborhood} imply that there is a matching in $G_c$ between $\{v_1,\ldots,v_{n-2}\}$ and $\mathcal{A}\setminus\{a_i,a_j\}$. Note that this matching is unique since $G_c$ contains no cycles. Let $\psi$ be the bijection in $S_{ij}$ that corresponds to the matching. We claim that the minimum in $\tropdet(M^{(i,j})$ is attained only by the term that corresponds to $\psi$. Indeed, let $\sigma \in S_{ij}$ be different from $\psi$. For all $k\in\{1,\ldots,n-2\}$, we have that $c_{\sigma(k)}+a_{\sigma(k)}\cdot P_k\geq c_{\psi(k)}+a_{\psi(k)}\cdot P_k$ and equality holds if and only if $(v_k,a_{\sigma(k)})\in E(G_c)$. If we take the sum of all these inequalities (and erase the term $\sum_{\ell\in\{1,\ldots,n\}\setminus\{i,j\}} c_{\ell}$), we see that $$\sum_{k=1}^{n-2} a_{\sigma(k)}\cdot P_k > \sum_{k=1}^{n-2} a_{\psi(k)}\cdot P_k ,$$ since at least one of the inequalities must be strict. This proves the claim and the theorem.
\end{proof}

\begin{remark}
The linear pencils $L'_2$ and $L_{\infty}$ in Example \ref{ex square} show that the two extra conditions on $L$ in Theorem \ref{thm main}, i.e. $L$ is trivalent and each vertex gives rise to a maximal subdivision of $\conv(\mathcal{A})$, are necessary. Note that $L'_2$ and $L_{\infty}$ are the stable linear pencils corresponding to respectively $\{(0,0,0),(1,1,0)\}$ and $\{0,0,0),(0,1,0)\}$. In fact, both linear pencils can be seen as a limit case of linear pencils  satisfying the two conditions and thus corresponding to general configurations, but the limit of these configurations is non-general.
\end{remark}

\begin{proof}[Proof of Theorem \ref{thm corollary main}]
Let $\Delta$ be a maximal regular subdivision of $\conv(\mathcal{A})$ and consider the facet of the secondary fan (see \cite{DLRS}) corresponding to $\Delta$, i.e. the set of all points $(c_1,\ldots,c_n)\in \mathbb{R}^n$ such that the projection to the last coordinate of the lower faces of the polytope $$\conv((a_1,c_1),\ldots,(a_n,c_n))\subset \mathbb{R}^4$$ is $\Delta$. We can take a tree $L\subset \TP^{n-1}$ of type $\mathcal{T}$ such that all its finite edges are contained in this facet (by taking the edge lengths small enough). Now the statement follows from Theorem \ref{thm main}.
\end{proof}


\begin{thebibliography}{9999}
\bibitem[BrSt]{BrSt} S. Brodsky, B. Sturmfels, {\it Tropical Quadrics Through Three Points}, to appear in Linear Algebra Appl.
\bibitem[DLRS]{DLRS} J.A. De Loera, J. Rambau, F. Santos, {\it Triangulations: Structures for Algorithms and Applications}, Springer series ``Algorithms and Computation in Mathematics'', Vol. 25, 2010, 539p.
\bibitem[GaMa]{GaMa} A. Gathmann, H. Markwig, {\it The Caporaso-Harris formula and plane relative Gromov-Witten invariants in tropical geometry}, Math. Ann. 338 (2007), 845-868.
\bibitem[Hall]{Hall} P. Hall, {\it On Representatives of Subsets}, J. London Math. Soc. 10 (1935), 26-30.
\bibitem[Mikh]{Mikh} G. Mikhalkin, {\it Enumerative tropical algebraic geometry in $\mathbb{R}^2$}, J. Amer. Math. Soc. 18 (2005), no. 2, 313-377.
\bibitem[RGST]{RGST} J. Richter-Gebert, B. Sturmfels, T. Theobald, \textit{First steps in tropical geometry}, Idempotent mathematics and mathematical physics, 289-317, Contemp. Math., 377, Amer. Math. Soc., Providence, RI, 2005.
\bibitem[SpSt]{SpSt} D. Speyer, B. Sturmfels, {\it The tropical Grassmannian}, Adv. Geom. 4 (2004), 389-411.
\end{thebibliography}
\end{document}